\numberwithin{equation}{section}
\newtheorem{thm}{Theorem}[section]
\newtheorem{lem}[thm]{Lemma}
\newtheorem{cor}[thm]{Corollary}
\newtheorem{pro}[thm]{Proposition}
\newtheorem{rem}[thm]{Remark}
\newcommand{\N}{\mathbb{N}}
\newcommand{\R}{\mathbb{R}}
\title[Conical diffusion]{Liouville results for semilinear integral equations with conical diffusion}
\author{Isabeau Birindelli, Lele Du and Giulio Galise}
\address{Dipartimento di Matematica Guido Castelnuovo, Sapienza
Universit\`a di Roma, Piazzale Aldo Moro 5, 00185, Roma, Italy.}
\email[I. Birindelli]{isabeau@mat.uniroma1.it}
\email[L. Du]{lele.du@uniroma1.it}
\email[G. Galise]{galise@mat.uniroma1.it}
\begin{document}

\def\ON{\text{ on }}

\begin{abstract}
Nonexistence results for positive supersolutions of the equation $$-Lu=u^p\quad\text{in $\R^N_+$}$$  are obtained,    $-L$ being any symmetric and  stable linear operator, positively homogeneous of degree $2s$, $s\in(0,1)$, whose spectral measure is absolutely continuous and positive only in a relative open set of the unit sphere of $\R^N$. The results are sharp: $u\equiv 0$ is the only nonnegative supersolution in the subcritical regime $1\leq p\leq\frac{N+s}{N-s}\,$, while  nontrivial supersolutions exist, at least for some specific $-L$, as soon as $p>\frac{N+s}{N-s}$. \\ The arguments used rely on a rescaled test function's method, suitably adapted to such nonlocal setting with weak diffusion; they are quite general and also employed to obtain Liouville type results in the whole space.
\end{abstract}

\maketitle 

\allowdisplaybreaks

\section{Introduction}
In this paper we obtain Liouville theorems for supersolutions of  semilinear integral equations in the half-space $\R^N_+:=\left\{x\in\R^N:\,x_N>0\right\}$ with the following class of nonlocal operators
\begin{equation}\label{eq1}
Lu\left(x\right)
=\left(1-s\right)\int_{\R^{N}}\frac{u\left(x+y\right)+u\left(x-y\right)-2u\left(x\right)}{\left|y\right|^{N+2s}}a\left(\frac{y}{\left|y\right|}\right)dy,
\end{equation}
where $0<s<1$ and $a\left(\theta\right)\in L^{\infty}\left(\mathbb{S}^{N-1}\right)$ is a nonnegative and even function on the unit sphere $\mathbb{S}^{N-1}$ of $\R^N$. We will be more precise later, but let us emphasize immediately that the function $a\left(\theta\right)$ can be chosen so that the  operator diffuses only along a cone. 

\smallskip

The main result will be about nonexistence of   classical solutions, besides the trivial one,  $u\in C^{2}\left(\R_{+}^{N}\right)\cap\mathcal{L}_{s}$ of the problem
\begin{align}\label{1-0-3}
\left\lbrace 
\begin{aligned}
-Lu&\geq u^{p},&&x\in\R_{+}^{N},\\
u&\geq0,&&x\in\R^N,
\end{aligned}
\right.
\end{align}
where $\mathcal{L}_{s}$ is the natural functional space required that will be defined precisely later. 

\smallskip

Nonexistence of entire bounded harmonic functions, goes back to Cauchy (1844) and Liouville (1850), but since the acclaimed works of Gidas and Spruck \cite{GS2} much attention has been given to so called semilinear Liouville theorems in the whole space and in half-spaces. Indeed these nonexistence results in unbounded domains correspond to existence results in bounded domains thanks to the priori bounds they imply and the use of Leray-Schauder degree theory. 

A particular role has been given in these decades to determine the range of existence, or better lack of it, when instead of considering solutions, one concentrates on supersolutions. This has been done  for very different cases:  the case of cone-like domains for linear and quasilinear operators appeared for instance in \cite{AS, BVP, BCDN, GMQS,KLM}, existence and nonexistence for positive supersolutions in the whole space, half-spaces and exterior domains for fully nonlinear inequalities can be found e.g. in  \cite{AS2,CL,L}, systems and sub-Laplacians have been respectively addressed in \cite{BM,QS} and \cite{BG,BCDC}, while as far as nonlinear degenerate elliptic operators are concerned we refer to \cite{BC, BGL}.

We cannot recall all the results in this area but the interested readers can refer to the book \cite{QuS} and the references therein. We just remind that, for the Laplacian,  if
$$1\leq p\leq \frac{N+1}{N-1}$$
then the inequality
$$\Delta u+u^p\leq 0\quad\text{in $\R^N_+$}$$
admits only the trivial solution (in fact such result is still true for $p\geq-1$, but in this paper we only consider the superlinear case $p\geq1$). Furthermore, the bound $\frac{N+1}{N-1}$ is sharp in the sense that for $p> \frac{N+1}{N-1}$ it is possible to construct explicit positive supersolutions. 

\smallskip

In the realm of nonlocal operators, the results are more recent but nonetheless really numerous, see e.g. \cite{BoKN,CDaL,F,ZCCY} concerning entire $s$-harmonic functions and \cite{CLO,DDW,FW2,YZ} for positive solutions and supersolutions  of Lane-Emden type equations/systems in $\R^N$ or in exterior domains. Since we are mainly concerned with supersolutions of semilinear nonlocal operators in half-spaces, we will recall results directly related with this cases. In \cite{CFY}, using the method of moving planes in integral forms,  Chen, Fang and Yang have proved that  for  $1<p\leq\frac{N+ 2s}{N-2s}$ any nonnegative and locally bounded distributional  solution of 
\begin{align*}
\left\lbrace 
\begin{aligned}
(-\Delta)^su&=u^{p},&&x\in\R_{+}^{N},\\
u&=0,&&x\in\overline{\R_{-}^{N}},
\end{aligned}
\right.
\end{align*}
where $\R^N_-:=\left\{x\in\R^N:\,x_N<0\right\}$, is identically zero. The result has been extended in \cite{CX} for a larger class of operators. 

Instead, for supersolutions, the proofs and the range for which the validity of the Liouville theorem holds are different and very few results are available in the half-spaces. 
We wish to mention the result for nonlocal fully nonlinear operators by Nornberg, dos Prazeres and Quaas \cite{NdPQ} where they study the existence of fundamental solutions in conical domains for fully  nonlinear  integral operators of Isaacs type and, as application, they obtain Liouville type results in subcritical regimes. In the special case where the cone is the half-space $\R^N_+$ and the diffusion operator is $(-\Delta)^s$ they find in particular that for
$$1\leq p\leq \frac{N+s}{N-s}$$
the only nonnegative (in $\R^N$)  viscosity supersolutions of the problem
\begin{equation*}
(-\Delta)^s u\geq u^p\quad\text{in $\R^N_+$}
\end{equation*}
is the trivial one (even in this case the results is still valid for $p\geq-1$).

\smallskip

In the works mentioned above, even in the nonlinear case, the kernels of operators considered are, up to some constant, bounded above and below by that of the fractional Laplacian. Here instead, the operators $L$ we consider are part of the larger class of infinitesimal generators of stable Lévy processes
\begin{equation}\label{eq2}
Lu\left(x\right)
=(1-s)\int_{\mathbb{S}^{N-1}}\int_{0}^{+\infty}\frac{u\left(x+t\theta\right)+u\left(x-t\theta\right)-2u\left(x\right)}{t^{1+2s}}dtd\mu, 
\end{equation}
where the spectral measure $\mu=\mu\left(\theta\right)$ on $\mathbb{S}^{N-1}$ satisfies the ellipticity conditions
\begin{align*}
0<\lambda\leq\inf_{\nu\in\mathbb{S}^{N-1}}\int_{\mathbb{S}^{N-1}}\left|\nu\cdot\theta\right|^{2s}d\mu\quad\text{and}\quad\int_{\mathbb{S}^{N-1}}d\mu\leq\Lambda<\infty.
\end{align*}
When $\mu$ is absolutely continuous and $d\mu=a\left(\theta\right)d\theta$, then  \eqref{eq1} and \eqref{eq2} coincide.
\newline
At this stage it is worth to point out that the operator \eqref{eq1} converges, as $s\to1^-$, to a second order linear uniformly elliptic operators with constant coefficients, in the sense that for any $u\in C^2_0(\R^N)$, then  
$$
\lim_{s\to1^-}Lu(x)=\sum_{i,j=1}^Na_{ij}\partial^2_{ij}u(x)\qquad\forall x\in\R^N,
$$ 
where the constant coefficients $a_{ij}$ depend on the function $a(\theta)$.

Moreover, let us point out that, since the normalizing constant $1-s$ in \eqref{eq1} is irrelevant for the issues that we address in the present paper, henceforth it will be omitted.

 We refer to the previous works of Ros-Oton and Serra \cite{ROS1}  and in particular to the very interesting paper \cite{ROS2} where, in order to give boundary estimates, some linear Liouville theorems in half-spaces are proved under the further assumption that the function $a(\theta)$ is positive bounded away from zero everywhere.

Contrarily to the cases where  $a(\theta)$ is positive bounded away from zero everywhere, here we only suppose that $a(\theta)$ is positive in some relative open set on $\mathbb S^{N-1}$.  Clearly if $a(\theta)$ is bounded and bounded away from zero then the kernel of the operator \eqref{eq1} can be compared from above and below by that of the fractional Laplacian, which corresponds to the case $a(\theta)$ is a constant function. Other results concerning classifications of entire solutions and H\"older regularity for nonlocal operators with anisotropic diffusion can be found respectively in \cite{FW,KRS}.

\smallskip

We can now give the precise conditions on the function $a(\theta)$. Let us fix the following notation: for any  vector $\nu\in\mathbb{S}^{N-1}$ and $0<\tau\leq1$, we define the closed two fold cone $\Sigma_{\nu,\tau}\left(x\right)$ of aperture $\arccos(1-\tau)\in\left(0,\frac\pi2\right]$, with vertex $x\in\R^{N}$ and axis $\nu$,   by
\begin{align}\label{cone}
\Sigma_{\nu,\tau}\left(x\right):=\left\{y\in\R^{N}:\;\left|\left(y-x\right)\cdot\nu\right|\geq\left(1-\tau\right)\left|y-x\right|\right\}.
\end{align}
In all the paper, we assume that for some constants $0<d<D$,
\begin{align}\label{1-0-1}
0\leq a(\theta)\leq D\quad\text{in $\mathbb{S}^{N-1}$}
\end{align}
 and  that there exist $\nu_{0}\in\mathbb{S}^{N-1}$ and $0<\tau_{0}\leq1$ such that
\begin{align}\label{1-0-2}
a(\theta)\geq d>0\quad\text{in $\Sigma_{\nu_{0},\tau_{0}}\left(0\right)\cap \mathbb{S}^{N-1}$}.
\end{align}
Hence in all the proofs we can only use that the operator \eqref{eq1} diffuses along a fixed cone. This feature will induce some delicate geometric constructions.

\smallskip

Let
\begin{align*}
\mathcal{L}_{s}=\left\{u\in L^{1}_{loc}\left(\R^{N}\right):\,\limsup_{\left|x\right|\rightarrow+\infty}\frac{\left|u\right|}{\left|x\right|^{2s-\delta}}<+\infty\text{\; for some\,}\delta\in(0,2s]\right\}.
\end{align*}

Our main result, is the following
\begin{thm}\label{th1}
Let $s\in(0,1)$ and let $L$ be any operator of the form \eqref{eq1} satisfying \eqref{1-0-1}-\eqref{1-0-2}. If $1\leq p\leq\frac{N+s}{N-s}$ and $u\in C^{2}\left(\R_{+}^{N}\right)\cap\mathcal{L}_{s}$ is a solution of \eqref{1-0-3}, then $u\equiv0$.
\end{thm}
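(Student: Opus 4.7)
The plan is a rescaled test function argument (Mitidieri--Pohozaev style) adapted both to the half-space geometry and to the weak conical diffusion \eqref{1-0-1}--\eqref{1-0-2}. The starting point is the identity, valid by symmetry of $L$ and the supersolution inequality, that
$$
\int_{\R^N_+}u^p\,\varphi\,dx\leq\int_{\R^N_+}\varphi\,(-Lu)\,dx=\int_{\R^N}u\,(-L\varphi)\,dx
$$
for any smooth nonnegative $\varphi$ supported in $\R^N_+$. Choosing $\varphi$ of the form $\varphi=\eta^q$ with $q$ large and applying Hölder with $p'=p/(p-1)$, one obtains the master estimate
$$
\int_{\R^N_+}u^p\,\varphi\,dx\leq\int_{\R^N}\varphi^{\,1-p'}\,|L\varphi|^{\,p'}\,dx.
$$
The proof is reduced to producing a family $\{\varphi_R\}_{R>0}$ of such test functions with $\varphi_R\nearrow 1$ in $\R^N_+$ as $R\to\infty$ and with the right-hand side uniformly bounded in $R$; monotone convergence would then force $u\equiv0$.

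Two features must be built into $\varphi_R$. First, to reach the half-space exponent $\tfrac{N+s}{N-s}$ rather than the interior Serrin exponent $\tfrac{N}{N-2s}$ that a naive radial cutoff would give, one must multiply a scale-$R$ cutoff by a factor mimicking the boundary profile $x_N^s$; this additional weight drops the effective dimension in the scaling of the test integral from $N$ to $N-s$, which is exactly what accounts for the gap between the two exponents. Second, since $a(\theta)$ is known to be positive only on the relatively open cone $\Sigma_{\nu_0,\tau_0}\cap\mathbb{S}^{N-1}$, the shape and orientation of $\mathrm{supp}(\varphi_R)$ must be aligned with $\nu_0$ so that the integrand defining $L\varphi_R$ captures genuine contributions in directions along which $a$ is bounded below by $d$. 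A natural choice is to center $\varphi_R$ at a point $x_R\in\R^N_+$ with $x_{R,N}\sim R$, align its support with $\nu_0$, and rescale; the scaling identity $L\varphi_R(x)=R^{-2s}(L\bar\varphi)((x-x_R)/R)$ then reduces everything to a fixed-scale computation, yielding
$$
\int_{\R^N}\varphi_R^{\,1-p'}|L\varphi_R|^{\,p'}\,dx\leq C\,R^{\gamma(p)},
$$
with $\gamma(p)\leq 0$ exactly in the range $1\leq p\leq\tfrac{N+s}{N-s}$. The critical case $\gamma(p)=0$ is then addressed by a dyadic annular decomposition and absolute continuity of the measure $u^p\,dx$.

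\textbf{The main obstacle is the estimation of $L\varphi_R$ itself.} The asymmetric kernel $a(\theta)/|y|^{N+2s}$ breaks pointwise identities valid for the fractional Laplacian: in particular $L(x_N^s)\not\equiv 0$ in $\R^N_+$, so the boundary-weight factor produces a genuine error term that must be shown to be of lower order, and one needs sharp upper bounds for $|L\varphi_R|(x)$ even at points $x$ outside $\mathrm{supp}(\varphi_R)$ so as to keep $|L\varphi_R|^{p'}\varphi_R^{1-p'}$ integrable across the support boundary. Both issues force the delicate geometric constructions previewed after \eqref{1-0-2}: at every relevant $x$ one must identify a substantial subset of the cone $\Sigma_{\nu_0,\tau_0}(x)$ on which the finite-difference kernel in \eqref{eq1} has favorable sign or controlled size, and track how this intersection varies as $x$ ranges over $\R^N$. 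Exactly here the aperture $\tau_0$, the lower bound $d>0$, and the half-space position $x_{R,N}\sim R$ have to be balanced quantitatively.
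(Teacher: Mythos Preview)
There is a factual error and a genuine gap. First, your claim that the conical kernel forces $L((x_N)_+^s)\not\equiv 0$ in $\R^N_+$ is wrong: for \emph{every} operator of the form \eqref{eq1} one has $L((x_N)_+^\alpha)=C_\alpha\, x_N^{\alpha-2s}$ in $\R^N_+$ with $C_s=0$ (Lemma~\ref{2-3}; the computation factorizes through a one-dimensional integral and the constant $c_s$ vanishes exactly as for $(-\Delta)^s$). So the boundary weight $(x_N)_+^s$ is genuinely $L$-harmonic and contributes no ``error term'' of the kind you describe.

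The real obstacle near $\partial\R^N_+$ is different, and your master inequality cannot absorb it. Writing $\varphi=(x_N)_+^s\psi$ with $\psi$ a cutoff equal to $1$ near a boundary point $x_0\in\partial\R^N_+$, the product rule (Lemma~\ref{2-1}) gives $L\varphi=x_N^s\,L\psi+l[(x_N)_+^s,\psi]$, and the cross term satisfies $l[(x_N)_+^s,\psi](x_0)<0$: the integrand $(\psi(x_0\pm y)-1)(\pm y_N)_+^s$ is nonpositive and strictly negative on a set of positive measure inside the cone $\Sigma_{\nu_0,\tau_0}(0)$. Hence $(-L\varphi)_+$ stays bounded below by a positive constant near $x_0$ while $\varphi\sim x_N^s\to 0$, and your integral $\int\varphi^{1-p'}|L\varphi|^{p'}$ (or, after the obvious repair replacing $|L\varphi|$ by $(-L\varphi)_+$) behaves like $\int x_N^{s(1-p')}\,dx_N$. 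This diverges for every $p\le 1+s$; since $\frac{N+s}{N-s}<1+s$ whenever $N\ge 3$, the estimate fails on the entire subcritical range, and no choice of $\eta^q$ or alignment with $\nu_0$ repairs it. The paper bypasses this not by sharper bounds on $|L\varphi|$ but by introducing an auxiliary test function $\phi_{\alpha_0}=(x_N)_+^{\alpha_0}\psi$ with $\alpha_0\in(s,\min\{1,2s\})$: because $C_{\alpha_0}>0$, the term $C_{\alpha_0}\psi(x)\,x_N^{\alpha_0-2s}\to+\infty$ dominates the cross term, and together with the geometric choice of $\mathrm{supp}(\psi)$ (Corollary~\ref{3-5}) one obtains the \emph{pointwise} inequality $-L(\phi_s+\phi_{\alpha_0})\le M\phi_s$ in $\R^N_+$ (Step~1 of Theorem~\ref{3-3}). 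A single integration by parts and H\"older then give $\int u^p\phi_{s,R}\le MR^{-2s}\int u\,\phi_{s,R}$ with the correct scaling $R^{N+s-2sp/(p-1)}$; the $\eta^q$ device is never invoked.
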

We were inspired by the pioneering proof used by Berestycki, Capuzzo Dolcetta and Nirenberg in \cite{BCDN} for supersolutions of semilinear Liouville  Theorem in cones. But it is important to notice that the key ingredients they use need to be completely reconsidered due  both to the nonlocal character of the operators we consider and their weak diffusion. Even the simple integrations by parts formula,  since the test functions we shall use in the proof of Theorem \ref{th1} are not smooth on $\partial\R^N_+$, needs to be proved (see Proposition \ref{2-4}). The other novelty is in the ad hoc construction of the test functions. Indeed we construct a sequence of nonnegative compactly supported test functions that converge to 1 in the whole half space, but their supports depend on the cone $\Sigma_{\nu_{0},\tau_{0}}\left(0\right)$ where the function $a(\theta)$ is supported. 

Interestingly the bound on $p$, does not depend on the size of the cone. Furthermore, at least in the case of the fractional Laplacian, the bound is optimal: for any $p>\frac{N+s}{N-s}$ the problem
\begin{align*}
\left\lbrace 
\begin{aligned}
\left(-\Delta\right)^{s}u&\geq u^{p},&&x\in\R_{+}^{N},\\
u&=0,&&x\in\R_{-}^{N}
\end{aligned}
\right.
\end{align*}
admits positive solutions $u\in C^{2}\left(\R_{+}^{N}\right)\cap\mathcal{L}_{s}$. This is proved in  Theorem \ref{4-1}. 

\smallskip

The arguments used in the proof of Theorem \ref{th1} are quite flexible and they also apply to get the equivalent Liouville  result in the whole space $\R^N$,  in a simplified form due to the absence of the boundary of the domain which, instead, poses additional difficulties in the case of half-spaces. We obtain nonexistence of positive solutions provided $1\leq p\leq\frac{N}{N-2s}$, see Theorem \ref{5-1}. This range is known to be optimal for the fractional Laplacian and even more, as showed by Felmer and Quaas in \cite{FQ}, for Pucci's nonlinear operators ${\mathcal M}^\pm$, which are extremal operators among the class of linear integral operators whose kernels, differently to the cases treated in the present paper, are comparable to that of the fractional Laplacian. The critical exponents associated to Pucci's operators are $p=\frac{N^\pm}{N^\pm-2s}$, where $N^\pm$ are dimensional-like numbers which reduce to the dimension $N$ when $-{\mathcal M}^\pm=(-\Delta)^s$.

\medskip
The paper is organized as follows. Section \ref{s2} contains several technical results that will be used throughout the paper. Section \ref{s3} is devoted to the proof of Theorem \ref{th1} and Section \ref{s4} is concerned with the optimality of the exponent $p=\frac{N+s}{N-s}$. In the last Section \ref{s5}, we prove a Liouville theorem in the whole space. Let us mention that the case $N=1$ is much simpler and some remarks concerning the results in that case are scattered along the paper (See e.g. Remarks  \ref{N12} and \ref{N13}).

\section{preliminary}\label{s2}

In this section we prove several technical results that will be used in the proof of Theorem \ref{th1}. We start by the following simple lemma, where it is proved that the nonnegativity assumption of $u$ in $\overline{\R^N_-}$ can be, in fact, replaced by the simplest one $u=0$ in $\overline{\R_-^N}$. This reduction is useful in the integration by parts formula of Proposition \ref{2-4}.

\smallskip

Henceforth we denote by $e_N=(0,\ldots,0,1)$ and we use the notation $C=C(\cdot)$ to denote positive constants depending on given quantities.

\begin{lem}\label{2-5}
For any classical solution $u$ of \eqref{1-0-3}, the translation-truncation function (see Figure \ref{figure1}) 
\begin{align}\label{eq10}
\widetilde{u}\left(x\right)=
\left\lbrace 
\begin{aligned}
&u\left(x+e_{N}\right),&&x\in\R_{+}^{N},\\
&0,&&x\in\overline{\R_{-}^{N}}
\end{aligned}
\right.
\end{align}
is a nonnegative classical solution of the problem
\begin{align}\label{2-5-1}
\left\lbrace 
\begin{aligned}
-Lu&\geq u^{p},&&x\in\R_{+}^{N},\\
u&=0,&&x\in\overline{\R_{-}^{N}}.
\end{aligned}
\right.
\end{align}
\begin{figure}[htbp]
\centering
\includegraphics[width=0.6\textwidth]{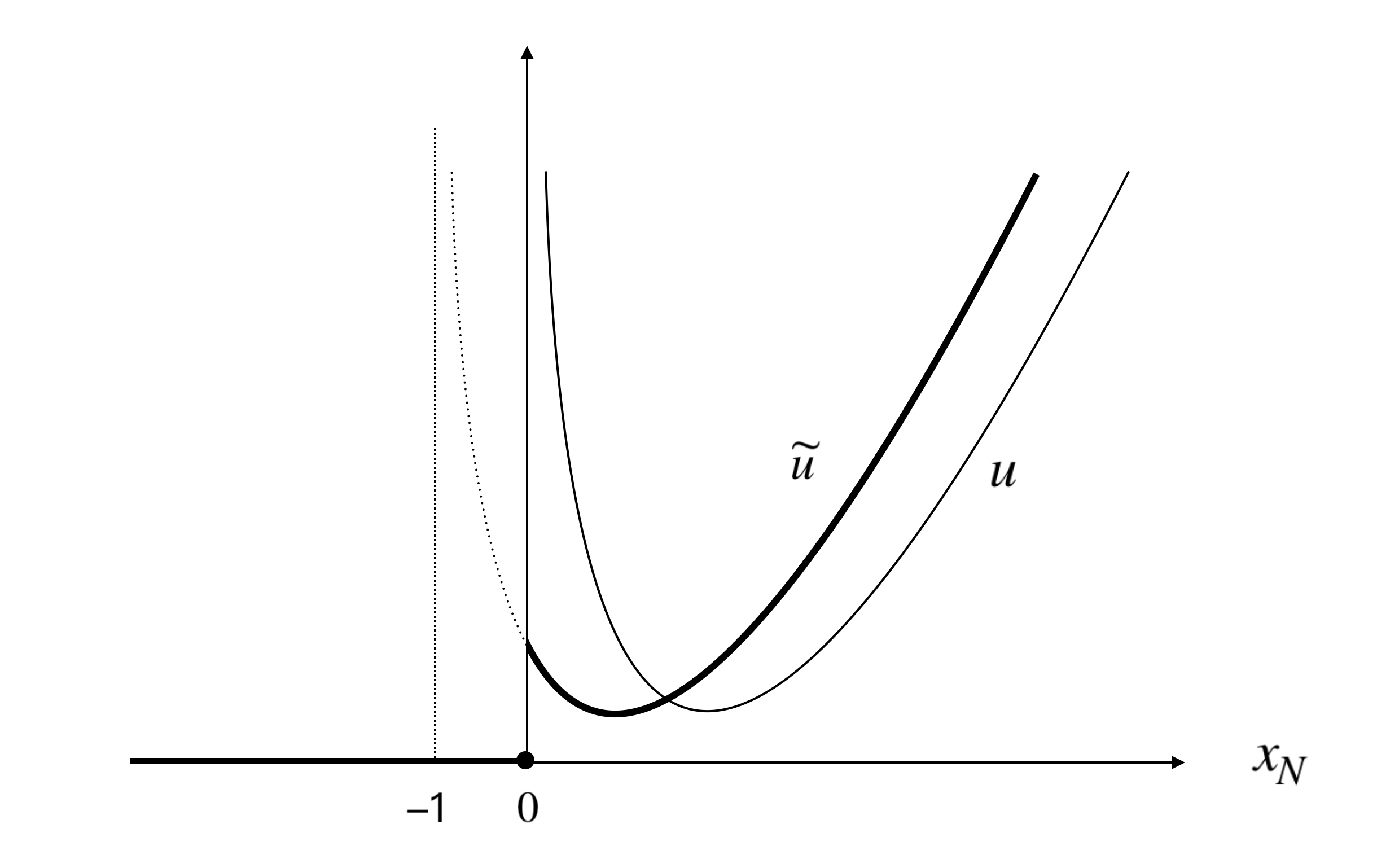}
\caption{The graph of $\widetilde{u}$.}
\label{figure1}
\end{figure}
\end{lem}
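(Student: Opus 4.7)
The plan is to verify the three properties of $\widetilde u$ required by \eqref{2-5-1}: nonnegativity on $\R^N$, the boundary condition $\widetilde u=0$ on $\overline{\R^N_-}$, and the pointwise inequality $-L\widetilde u\geq \widetilde u^p$ in $\R^N_+$. The first two are built into the definition together with the hypothesis $u\geq 0$ on $\R^N$. For the regularity, I would observe that when $x\in\R^N_+$ one has $x+e_N\in\R^N_+$, so $u$ being $C^2$ at $x+e_N$ yields $\widetilde u\in C^2(\R^N_+)$; the $\mathcal L_s$ growth bound on $\widetilde u$ is likewise inherited from that of $u$. Since $u$ satisfies \eqref{1-0-3} at the interior point $x+e_N$, we already have $-Lu(x+e_N)\geq u(x+e_N)^p=\widetilde u(x)^p$, so the whole matter reduces to proving $L\widetilde u(x)\leq Lu(x+e_N)$ for every $x\in\R^N_+$.

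The heart of the argument is the pointwise comparison
\[
\widetilde u(z)\leq u(z+e_N)\qquad\text{for every }z\in\R^N.
\]
This I would establish by splitting into two cases: if $z_N>0$ the relation is an equality by the definition of $\widetilde u$, while if $z_N\leq 0$ the left-hand side vanishes and the right-hand side is nonnegative (whichever half-space $z+e_N$ falls into, $u\geq 0$ holds by the standing assumption of \eqref{1-0-3}). Applying this at $z=x\pm y$ and using the identity $\widetilde u(x)=u(x+e_N)$ for $x\in\R^N_+$, one obtains
\[
\widetilde u(x+y)+\widetilde u(x-y)-2\widetilde u(x)\leq u(x+e_N+y)+u(x+e_N-y)-2u(x+e_N).
\]
Multiplying by the nonnegative kernel $a(y/|y|)|y|^{-N-2s}$ and integrating over $\R^N$ then gives $L\widetilde u(x)\leq Lu(x+e_N)$, and combining with the supersolution property at $x+e_N$ closes the desired inequality.

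The only technical point to check is well-posedness of the singular integral defining $L\widetilde u(x)$: near $y=0$ the $C^2$ regularity of $\widetilde u$ at the interior point $x$ provides the usual $O(|y|^2)$ control on the symmetric second difference, while at infinity the bound $\widetilde u(x\pm y)\leq u(x+e_N\pm y)$ combined with $u\in\mathcal L_s$ and the decay $|y|^{-N-2s}$ of the kernel guarantee tail integrability. I do not anticipate any serious obstacle here; the lemma is essentially the monotonicity statement that replacing $u$ by zero on a region where $u$ is already nonnegative cannot raise the value of $L$ at neighboring points, which is automatic from the nonnegativity of the kernel.
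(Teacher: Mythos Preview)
Your proposal is correct and follows essentially the same route as the paper: the key pointwise comparison $\widetilde u(z)\leq u(z+e_N)$ for all $z\in\R^N$, applied at $z=x\pm y$ together with $\widetilde u(x)=u(x+e_N)$, yields the second-difference inequality and hence $-L\widetilde u(x)\geq -Lu(x+e_N)\geq u^p(x+e_N)=\widetilde u^p(x)$. If anything, you are slightly more explicit than the paper about regularity and integrability, but the argument is the same.
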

\begin{proof}
For $x\in\R_{+}^{N}$ and any $y\in\R^{N}$ it holds that
\begin{align*}
\widetilde{u}\left(x\pm y\right)\leq u\left(x+e_{N}\pm y\right).
\end{align*}
Hence
\begin{align*}
-L\widetilde{u}\left(x\right)
&=-\int_{\R^{N}}\frac{\widetilde{u}\left(x+y\right)+\widetilde{u}\left(x-y\right)-2\widetilde{u}\left(x\right)}{\left|y\right|^{N+2s}}a\left(\frac{y}{\left|y\right|}\right)dy\\
&\geq-\int_{\R^{N}}\frac{u\left(x+e_{N}+y\right)+u\left(x+e_{N}-y\right)-2u\left(x+e_{N}\right)}{\left|y\right|^{N+2s}}a\left(\frac{y}{\left|y\right|}\right)dy\\
&=-Lu\left(x+e_{N}\right).
\end{align*}
Since $u$ is a solution of \eqref{1-0-3}, we have
\begin{align*}
-L\widetilde{u}\left(x\right)\geq-Lu\left(x+e_{N}\right)\geq u^{p}\left(x+e_{N}\right)=\widetilde{u}^{p}\left(x\right).
\end{align*}
\end{proof}

For later purposes we consider the class of nonnegative functions $u\in C^{2}\left(\R_{+}^{N}\right)\cap\mathcal{L}_{s}$ satisfying the following regularity condition up to the boundary of $\R^N_+$:
\begin{align}\label{2-0-1}
\left\|u\right\|_{C^{2}\left(K\cap{\R^{N}_{+}}\right)}<+\infty,\quad \forall \text{ compact }K\subseteq\overline{\R_{+}^{N}}.
\end{align}
Note that if $u$ is a classical solution  of \eqref{1-0-3}, then the function $\widetilde{u}$ defined in Lemma \ref{2-5} by \eqref{eq10}  meets condition \eqref{2-0-1}.


It is well known that $L$ is a self adjoint operator but we need to take care of the “integration by parts”
since the test function is not smooth. This is the object of the next Proposition.

\begin{pro}\label{2-4}
Let $(2s-1)_+<\alpha<2s$, $\psi\in C^{2}_{0}\left(\R^{N}\right)$ and $v_{\alpha}\left(x\right)=\left(x_{N}\right)_{+}^{\alpha}\psi\left(x\right)$. Then for any $u\in C^{2}\left(\R_{+}^{N}\right)\cap\mathcal{L}_{s}$ satisfying \eqref{2-0-1} and $u=0$ in $\overline{\R_{-}^{N}}$, we have
\begin{align}\label{eq1rev}
\int_{\R^{N}}uLv_{\alpha}dx=\int_{\R^{N}}v_{\alpha}Ludx.
\end{align}
\end{pro}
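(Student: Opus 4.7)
The plan is to write the left-hand side as the iterated integral
\[
\int_{\R^N} u(x) \int_{\R^N} \frac{v_\alpha(x+y)+v_\alpha(x-y)-2v_\alpha(x)}{|y|^{N+2s}}\, a\!\left(\frac{y}{|y|}\right) dy\, dx,
\]
apply Fubini's theorem to interchange the order of integration, and then perform the substitutions $x\mapsto x\mp y$ in the two terms involving $v_\alpha(x\pm y)$ so as to reassemble the integrand into $v_\alpha(x)\bigl[u(x+y)+u(x-y)-2u(x)\bigr]$, which directly yields $\int v_\alpha L u\, dx$. The nontrivial task is to justify Fubini by proving absolute integrability of the double integral, and this is exactly where the hypothesis $(2s-1)_+ < \alpha < 2s$ enters.

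To establish absolute integrability I would introduce
\[
M(x) := \int_{\R^N}\bigl|v_\alpha(x+y)+v_\alpha(x-y)-2v_\alpha(x)\bigr|\, \frac{a(y/|y|)}{|y|^{N+2s}}\, dy
\]
and show $\int_{\R^N} |u(x)|\,M(x)\,dx < \infty$. Since $u\equiv 0$ on $\overline{\R^N_-}$, only $x\in\R^N_+$ contributes. For such $x$ I split the $y$-integration into three regions: when $|y|\leq x_N/2$, the function $v_\alpha$ is locally $C^2$ with $\|D^2 v_\alpha\|_{L^\infty(B(x,x_N/2))}\leq C x_N^{\alpha-2}$, so Taylor's theorem contributes an amount of order $x_N^{\alpha-2s}$; when $x_N/2<|y|<1$, the crude bound $|v_\alpha(x\pm y)|,|v_\alpha(x)|\leq C|y|^\alpha$ (valid since $x_N\leq 2|y|$) combined with $\alpha<2s$ again gives order $x_N^{\alpha-2s}$; when $|y|\geq 1$, the boundedness of $v_\alpha$ yields a uniform $O(1)$ bound that is replaced by the decay $(1+|x|)^{-N-2s}$ outside a large ball thanks to the compact support of $\psi$. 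Integrating in $x$, the boundary contribution is controlled by $\int_0^1 x_N^{\alpha-2s}\,dx_N$, which is finite precisely because $\alpha>2s-1$; the boundedness of $|u|$ near $\partial\R^N_+$ comes from \eqref{2-0-1}; and at infinity the growth $|u(x)|\leq C(1+|x|^{2s-\delta})$ from $u\in\mathcal{L}_s$ is compensated by the decay of $M$.

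The main obstacle is the sharpness of the boundary estimate. Both $Lv_\alpha$, which blows up like $x_N^{\alpha-2s}$ because of the Hölder singularity of $v_\alpha$, and $Lu$, which blows up like $x_N^{-2s}$ because of the jump of $u$ across $\partial\R^N_+$, are individually borderline integrable when paired with the opposite factor; the condition $\alpha>(2s-1)_+$ is exactly what keeps the majorant $|u|\,M$ in $L^1$. Without this lower bound the two singular effects would combine into a non-integrable contribution at the boundary and Fubini would fail, forcing a more delicate regularization argument (e.g.\ truncating $v_\alpha$ to $v_\alpha\,\eta_\epsilon(x_N)$ and passing to the limit with dominated convergence) in its place.
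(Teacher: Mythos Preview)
Your strategy coincides with the paper's: show absolute integrability of the double integral and invoke Fubini--Tonelli. Your estimate for $M(x)$ is correct and matches (up to a cosmetic difference in the splitting; the paper uses the substitution $z=y/x_N$ instead of your three regions) the paper's bound on the integrand
\[
F_1(x,y)=\frac{v_\alpha(x+y)+v_\alpha(x-y)-2v_\alpha(x)}{|y|^{N+2s}}\,a\!\left(\frac{y}{|y|}\right)u(x).
\]

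There is, however, a genuine gap. One Fubini swap is not enough. After passing to $\int_y\int_x F_1\,dx\,dy$ and changing variables in the inner $x$-integral (legitimate for each fixed $y\neq0$ because $v_\alpha$ is compactly supported and $u$ is locally bounded), you land on $\int_y\int_x F_2\,dx\,dy$ with
\[
F_2(x,y)=\frac{u(x+y)+u(x-y)-2u(x)}{|y|^{N+2s}}\,a\!\left(\frac{y}{|y|}\right)v_\alpha(x).
\]
To recognise this as $\int_x v_\alpha\,Lu\,dx=\int_x\int_y F_2\,dy\,dx$ you must swap the order \emph{again}, and that requires $F_2\in L^1(\R^N\times\R^N)$. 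The $L^1$ bound on $F_1$ does not transfer: the individual pieces $u(x)v_\alpha(x\pm y)/|y|^{N+2s}$ are not in $L^1$ (the $y=0$ singularity is non-integrable), so the substitutions cannot be carried out globally in the product space.

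The paper accordingly verifies both $\iint|F_1|<\infty$ and $\iint|F_2|<\infty$. For $F_2$ the key estimate is precisely the one you allude to in your closing paragraph: using that $u$ satisfies \eqref{2-0-1} and vanishes in $\overline{\R^N_-}$, one splits at $|y|=x_N$ to obtain
\[
\int_{\R^N}\frac{|u(x+y)+u(x-y)-2u(x)|}{|y|^{N+2s}}\,dy\leq C\,x_N^{-2s}\qquad\text{for }x\in\R^N_+\cap B_R,
\]
and since $F_2$ is supported in $\{x\in\R^N_+\cap B_R\}$, multiplying by $v_\alpha\sim x_N^\alpha$ yields $x_N^{\alpha-2s}$, integrable because $\alpha>2s-1$. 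So you already have the missing ingredient in hand; the defect is only that your plan treats it as commentary rather than as a required second verification.
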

\begin{proof}
Assume that supp $\psi\subseteq B_{R}$, $R>1$. Let us define two maps 
\begin{align*}
F_{1},F_{2}:\R^{N}\times\left(\R^{N}\backslash\left\{0\right\}\right)\rightarrow\R
\end{align*}
by
\begin{align*}
F_{1}\left(x,y\right):=\frac{v_{\alpha}\left(x+y\right)+v_{\alpha}\left(x-y\right)-2v_{\alpha}\left(x\right)}{\left|y\right|^{N+2s}}a\left(\frac{y}{\left|y\right|}\right)u\left(x\right)
\end{align*}
and
\begin{align*}
F_{2}\left(x,y\right):=\frac{u\left(x+y\right)+u\left(x-y\right)-2u\left(x\right)}{\left|y\right|^{N+2s}}a\left(\frac{y}{\left|y\right|}\right)v_{\alpha}\left(x\right).
\end{align*}
What needs to be proved in order to get \eqref{eq1rev} is
\begin{align}\label{2-4-1}
\int_{\R^{N}}\left(\int_{\R^{N}}\left|F_{1}\right|dy\right)dx<+\infty
\end{align}
and
\begin{align}\label{2-4-2}
\int_{\R^{N}}\left(\int_{\R^{N}}\left|F_{2}\right|dy\right)dx<+\infty.
\end{align}
Once  \eqref{2-4-1} and \eqref{2-4-2} are proved i.e. that $F_{1},F_{2}\in L^{1}\left(\R^{N}\times\R^{N}\right)$, we may apply Fubini-Tonelli's theorem and it is classical that \begin{align*}
&\int_{\R^{N}}uLv_{\alpha}dx=\int_{\R^{N}}\left(\int_{\R^{N}}F_{1}dy\right)dx=\int_{\R^{N}}\left(\int_{\R^{N}}F_{1}dx\right)dy\\
&=\int_{\R^{N}}\left(\int_{\R^{N}}\left[v_{\alpha}\left(x+y\right)+v_{\alpha}\left(x-y\right)-2v_{\alpha}\left(x\right)\right]u\left(x\right)dx\right)\frac{1}{\left|y\right|^{N+2s}}a\left(\frac{y}{\left|y\right|}\right)dy\\
&=\int_{\R^{N}}\left(\int_{\R^{N}}\left[u\left(x+y\right)+u\left(x-y\right)-2u\left(x\right)\right]v_{\alpha}\left(x\right)dx\right)\frac{1}{\left|y\right|^{N+2s}}a\left(\frac{y}{\left|y\right|}\right)dy\\
&=\int_{\R^{N}}\left(\int_{\R^{N}}F_{2}dx\right)dy=\int_{\R^{N}}\left(\int_{\R^{N}}F_{2}dy\right)dx=\int_{\R^{N}}v_{\alpha}Ludx.
\end{align*}

We first prove \eqref{2-4-1}. Note that
\begin{align}\label{2-4-3}
F_{1}\left(x,y\right)=0\quad\forall\left(x,y\right)\in\overline{\R_{-}^{N}}\times\left(\R^{N}\backslash\left\{0\right\}\right),
\end{align}
we only consider $\left(x,y\right)\in\R_{+}^{N}\times\left(\R^{N}\backslash\left\{0\right\}\right)$. For any $x\in\R^{N}_{+}\cap \left\{\left|x\right|\geq2R\right\}$, there is $v_{\alpha}\left(x\right)=0$. Moreover, if $\left|y\pm x\right|\geq R$, then $v_{\alpha}\left(x\pm y\right)=0$, while if $\left|y\pm x\right|\leq R$, we have
\begin{align*}
\left|y\right|\geq\left|x\right|-\left|y-x\right|\geq\frac{\left|x\right|}{2}.
\end{align*}
 Therefore, for any $x\in\R^{N}_{+}\cap \left\{\left|x\right|\geq2R\right\}$,
\begin{align*}
&\int_{\R^{N}}\frac{\left|v_{\alpha}\left(x+y\right)+v_{\alpha}\left(x-y\right)-2v_{\alpha}\left(x\right)\right|}{\left|y\right|^{N+2s}}dy\\
&=\int_{B_{R}\left(x\right)}\frac{\left|v_{\alpha}\left(x-y\right)\right|}{\left|y\right|^{N+2s}}dy+\int_{B_{R}\left(-x\right)}\frac{\left|v_{\alpha}\left(x+y\right)\right|}{\left|y\right|^{N+2s}}dy\\
&=\int_{B_{R}\left(x\right)}\frac{\left|\left(x_{N}-y_{N}\right)_{+}^{\alpha}\psi\left(x-y\right)\right|}{\left|y\right|^{N+2s}}dy+\int_{B_{R}\left(-x\right)}\frac{\left|\left(x_{N}+y_{N}\right)_{+}^{\alpha}\psi\left(x+y\right)\right|}{\left|y\right|^{N+2s}}dy\\
&\leq 2^{N+2s}R^{\alpha}\left\|\psi\right\|_{L^{\infty}\left(\R^{N}\right)}\left(\left|B_{R}\left(x\right)\right|+\left|B_{R}\left(-x\right)\right|\right)\frac{1}{\left|x\right|^{N+2s}}\\
&=C\left(N,s,\alpha,R,\left\|\psi\right\|_{L^{\infty}\left(\R^{N}\right)}\right)\frac{1}{\left|x\right|^{N+2s}}.
\end{align*}
For any $x\in\R^{N}_{+}\cap B_{2R}$, letting $z=\frac{y}{x_{N}}$, we have
\begin{align*}
&\int_{\R^{N}}\frac{\left|v_{\alpha}\left(x+y\right)+v_{\alpha}\left(x-y\right)-2v_{\alpha}\left(x\right)\right|}{\left|y\right|^{N+2s}}dy\\
&=\int_{\R^{N}}\frac{\left|\left(x_{N}+y_{N}\right)_{+}^{\alpha}\psi\left(x+y\right)+\left(x_{N}-y_{N}\right)_{+}^{\alpha}\psi\left(x-y\right)-2x_{N}^{\alpha}\psi\left(x\right)\right|}{\left|y\right|^{N+2s}}dy\\
&=x_{N}^{\alpha}\int_{\R^{N}}\frac{\left|\left(1+\frac{y_{N}}{x_{N}}\right)_{+}^{\alpha}\psi\left(x+y\right)+\left(1-\frac{y_{N}}{x_{N}}\right)_{+}^{\alpha}\psi\left(x-y\right)-2\psi\left(x\right)\right|}{\left|y\right|^{N+2s}}dy\\
&=\left(\int_{\R^{N}}\frac{\left|A\left(x,z\right)\right|}{\left|z\right|^{N+2s}}dz\right)x_{N}^{\alpha-2s},
\end{align*}
where
\begin{align*}
A\left(x,z\right)=\left(1+z_{N}\right)_{+}^{\alpha}\psi\left(x+x_{N}z\right)+\left(1-z_{N}\right)_{+}^{\alpha}\psi\left(x-x_{N}z\right)-2\psi\left(x\right).
\end{align*}
Since $A$ satisfies
\begin{align*}
\left\lbrace 
\begin{aligned}
&A\left(x,0\right)=0&&\forall x\in\R^N,\\
&A\left(x,z\right)=A\left(x,-z\right)&&\forall\left(x,z\right)\in\R^{N}\times\R^{N},\\
&A\in C^{2}\left(\R^{N}\times\overline{B_{\frac{1}{2}}}\right),
\end{aligned}
\right.
\end{align*}
and $\alpha>0$, then for any $x\in\R^{N}_{+}\cap B_{2R}$ a second order Taylor expansion yields
\begin{align}\label{eq6}
\left|A\right|
\leq C\left(\alpha,R,\left\|\psi\right\|_{C^{2}\left(\R^{N}\right)}\right)\times
\left\lbrace 
\begin{aligned}
&\left|z\right|^{2},&&\left|z\right|<\frac{1}{2},\\
&\left|z\right|^{\alpha},&&\left|z\right|\geq\frac{1}{2}.
\end{aligned}
\right.
\end{align}
Therefore, for $x\in\R^{N}_{+}\cap B_{2R}$, by the condition $\alpha<2s$,
\begin{align}\label{eq7}
\int_{\R^{N}}\frac{\left|A\right|}{\left|z\right|^{N+2s}}dz\leq C\left(N,s,\alpha,R,\left\|\psi\right\|_{C^{2}\left(\R^{N}\right)}\right).
\end{align}
We conclude that
\begin{align}\label{2-4-4}
\nonumber&\int_{\R^{N}}\frac{\left|v_{\alpha}\left(x+y\right)+v_{\alpha}\left(x-y\right)-2v_{\alpha}\left(x\right)\right|}{\left|y\right|^{N+2s}}dy\\
&\leq C\left(N,s,\alpha,R,\left\|\psi\right\|_{C^{2}\left(\R^{N}\right)}\right)\times
\left\lbrace 
\begin{aligned}
&\frac{1}{x_{N}^{2s-\alpha}},&&x\in\R^{N}_{+}\cap B_{2R},\\
&\frac{1}{\left|x\right|^{N+2s}},&&x\in\R^{N}_{+}\cap \left\{\left|x\right|\geq2R\right\}.
\end{aligned}
\right.
\end{align}
Let us notice that by the assumption $u\in\mathcal{L}_{s}$, $u$ satisfies \eqref{2-0-1} and $u=0$ in $\overline{\R_{-}^{N}}$, we deduce that there exists $\beta=\beta\left(u\right)>0$ such that
\begin{align}\label{2-4-5}
\left|u\right|\leq \beta\left(1+\left|x\right|^{2s-\delta}\right)\quad\forall x\in\R^{N}.
\end{align}
Since $\alpha>2s-1$, by \eqref{1-0-1}, \eqref{2-4-3}, \eqref{2-4-4} and \eqref{2-4-5},
\begin{align*}
&\int_{\R^{N}}\left(\int_{\R^{N}}\left|F_{1}\right|dy\right)dx\\
&=\int_{\R_{+}^{N}}\left|u\right|\left(\int_{\R^{N}}\frac{\left|v_{\alpha}\left(x+y\right)+v_{\alpha}\left(x-y\right)-2v_{\alpha}\left(x\right)\right|}{\left|y\right|^{N+2s}}\left|a\left(\frac{y}{\left|y\right|}\right)\right|dy\right)dx\\
&\leq D\int_{\R_{+}^{N}}\left|u\right|\left(\int_{\R^{N}}\frac{\left|v_{\alpha}\left(x+y\right)+v_{\alpha}\left(x-y\right)-2v_{\alpha}\left(x\right)\right|}{\left|y\right|^{N+2s}}dy\right)dx\\
&\leq C\left(D,N,s,\alpha,R,\left\|\psi\right\|_{C^{2}\left(\R^{N}\right)}\right)\left[\int_{\R^{N}_{+}\cap B_{2R}}\frac{\left|u\right|}{x_{N}^{2s-\alpha}}dx+\int_{\R^{N}_{+}\cap\left\{\left|x\right|\geq2R\right\}}\frac{\left|u\right|}{\left|x\right|^{N+2s}}dx\right]\\
&\leq C\left(D,N,s,\alpha,R,\left\|\psi\right\|_{C^{2}\left(\R^{N}\right)}\right)\left[\left\|u\right\|_{L^{\infty}\left(B_{2R}\right)}\int_{\R^{N}_{+}\cap B_{2R}}\frac{1}{x_{N}^{2s-\alpha}}dx\right.\\
&\quad\left.+\beta\int_{\R^{N}_{+}\cap\left\{\left|x\right|\geq2R\right\}}\frac{1+\left|x\right|^{2s-\delta}}{\left|x\right|^{N+2s}}dx\right]<+\infty.
\end{align*}
Next, we prove \eqref{2-4-2}. Since

\begin{align}\label{2-4-6}
F_{2}\left(x,y\right)=0\quad\forall\left(x,y\right)\in\left(\overline{\R^{N}_{-}}\cup\left\{\left|x\right|\geq R\right\}\right)\times\left(\R^{N}\backslash\left\{0\right\}\right),
\end{align}
we need to focus on $\left(x,y\right)\in\left(\R_{+}^{N}\cap B_{R}\right)\times\left(\R^{N}\backslash\left\{0\right\}\right)$. For any $x\in\R^{N}_{+}\cap B_{R}$, by \eqref{2-4-5},
\begin{align}\label{2-4-7}
\left|u\left(x\pm y\right)\right|\leq \beta\left(1+\left|x\pm y\right|^{2s-\delta}\right)\leq C\left(s,\delta,\beta,R\right)\left(1+\left|y\right|^{2s-\delta}\right)\quad\forall y\in\R^{N}.
\end{align}
Moreover, by \eqref{2-0-1}, we obtain $u\in C^{2}\left(\overline{\R^{N}_{+}}\cap B_{2R}\right)$ and together with \eqref{2-4-7}, we infer that
\begin{align*}
\frac{\left|u\left(x+y\right)+u\left(x-y\right)-2u\left(x\right)\right|}{\left|y\right|^{N+2s}}
\leq
\left\lbrace 
\begin{aligned}
&\left\|u\right\|_{C^{2}\left(B_{x_{N}}\left(x\right)\right)}\frac{1}{\left|y\right|^{N+2s-2}},&&\left|y\right|<x_{N},\\
&C\left(s,\delta,\beta,R\right)\frac{1+\left|y\right|^{2s-\delta}}{\left|y\right|^{N+2s}},&&\left|y\right|\geq x_{N}.
\end{aligned}
\right.
\end{align*}
Consequently, for any $x\in\R^{N}_{+}\cap B_{R}$, we get $\left\|u\right\|_{C^{2}\left(B_{x_{N}}\left(x\right)\right)}\leq\left\|u\right\|_{C^{2}\left(\overline{\R^{N}_{+}}\cap B_{2R}\right)}$ and then
\begin{align}\label{2-4-8}
\nonumber&\int_{\R^{N}}\frac{\left|u\left(x+y\right)+u\left(x-y\right)-2u\left(x\right)\right|}{\left|y\right|^{N+2s}}dy\\
\nonumber&\leq\left\|u\right\|_{C^{2}\left(\overline{\R^{N}_{+}}\cap B_{2R}\right)}\int_{\left|y\right|<x_{N}}\frac{1}{\left|y\right|^{N+2s-2}}dy+C\left(s,\delta,\beta,R\right)\int_{\left|y\right|\geq x_{N}}\frac{1+\left|y\right|^{2s-\delta}}{\left|y\right|^{N+2s}}dy\\
\nonumber&\leq C\left(s,\delta,\beta,R,\left\|u\right\|_{C^{2}\left(\overline{\R^{N}_{+}}\cap B_{2R}\right)}\right)\left(x_{N}^{2-2s}+\frac{1}{x_{N}^{2s}}+\frac{1}{x_{N}^{\delta}}\right)\\
&\leq C\left(s,\delta,\beta,R,\left\|u\right\|_{C^{2}\left(\overline{\R^{N}_{+}}\cap B_{2R}\right)}\right)\frac{1}{x_{N}^{2s}}.
\end{align}
Reasoning that $\alpha>2s-1$, by \eqref{1-0-1}, \eqref{2-4-6} and \eqref{2-4-8},
\begin{align*}
&\int_{\R^{N}}\left(\int_{\R^{N}}\left|F_{2}\right|dy\right)dx\\
&=\int_{\R^{N}_{+}\cap B_{R}}x_{N}^{\alpha}\left|\psi\right|\left(\int_{\R^{N}}\frac{\left|u\left(x+y\right)+u\left(x-y\right)-2u\left(x\right)\right|}{\left|y\right|^{N+2s}}\left|a\left(\frac{y}{\left|y\right|}\right)\right|dy\right)dx\\
&\leq D\left\|\psi\right\|_{L^{\infty}\left(\R^{N}\right)}\int_{\R^{N}_{+}\cap B_{R}}x_{N}^{\alpha}\left(\int_{\R^{N}}\frac{\left|u\left(x+y\right)+u\left(x-y\right)-2u\left(x\right)\right|}{\left|y\right|^{N+2s}}dy\right)dx\\
&\leq C\left(D,s,\delta,\beta,R,\left\|\psi\right\|_{L^{\infty}\left(\R^{N}\right)},\left\|u\right\|_{C^{2}\left(\overline{\R^{N}_{+}}\cap B_{2R}\right)}\right)\int_{\R^{N}_{+}\cap B_{R}}\frac{1}{x_{N}^{2s-\alpha}}dx<+\infty.
\end{align*}
\end{proof}

\begin{rem}
For the sake of completeness we point out that the assumption $\alpha<2s$ in Proposition \ref{2-4} can be in fact removed. Such condition has been used in \eqref{eq6}-\eqref{eq7}. On the other hand, since the function $\psi$ is compactly supported in the ball $B_R$, a more precise estimate of \eqref{eq6} can be obtained. Indeed, since for any $x\in\R^{N}_{+}\cap B_{2R}$  and $|z|\geq\frac{3R}{x_N}$ we have that $\psi(x\pm x_Nz)=0$, then 
\begin{align*}
\left|A\right|
\leq C\left(\alpha,R,\left\|\psi\right\|_{C^{2}\left(\R^{N}\right)}\right)\times
\left\lbrace 
\begin{aligned}
&\left|z\right|^{2},&&\left|z\right|<\frac{1}{2},\\
&\left|z\right|^{\alpha},&&\frac{1}{2}\leq\left|z\right|<\frac{3R}{x_{N}},\\
&1,&&\left|z\right|\geq\frac{3R}{x_{N}},
\end{aligned}
\right.
\end{align*}
and
\begin{align*}
\int_{\R^{N}}\frac{\left|A\right|}{\left|z\right|^{N+2s}}dz
\leq C\left(N,s,\alpha,R,\left\|\psi\right\|_{C^{2}\left(\R^{N}\right)}\right)\times
\left\lbrace 
\begin{aligned}
&1+\frac{1}{x_{N}^{\alpha-2s}},&&\alpha>2s,\\
&1+\left|\log{x_N}\right|,&&\alpha=2s,\\
&1,&&\alpha<2s.
\end{aligned}
\right.
\end{align*}
Therefore for any $x\in\R^{N}_{+}\cap B_{2R}$,
\begin{align*}
&\int_{\R^{N}}\frac{\left|v_{\alpha}\left(x+y\right)+v_{\alpha}\left(x-y\right)-2v_{\alpha}\left(x\right)\right|}{\left|y\right|^{N+2s}}dy\\
&=\left(\int_{\R^{N}}\frac{\left|A\right|}{\left|z\right|^{N+2s}}dz\right)x_{N}^{\alpha-2s}\\
&\leq C\left(N,s,\alpha,R,\left\|\psi\right\|_{C^{2}\left(\R^{N}\right)}\right)\times
\left\lbrace 
\begin{aligned}
&1,&&\alpha>2s,\\
&1+\left|\log{x_N}\right|,&&\alpha=2s,\\
&\frac{1}{x_{N}^{2s-\alpha}},&&\alpha<2s
\end{aligned}
\right.
\end{align*}
and as a result
\begin{align*}
&\int_{\R^{N}_{+}\cap B_{2R}}\left(\int_{\R^{N}}\left|F_{1}\right|dy\right)dx\\
&\leq D\left\|u\right\|_{L^{\infty}\left(B_{2R}\right)}\int_{\R^{N}_{+}\cap B_{2R}}\left(\int_{\R^{N}}\frac{\left|v_{\alpha}\left(x+y\right)+v_{\alpha}\left(x-y\right)-2v_{\alpha}\left(x\right)\right|}{\left|y\right|^{N+2s}}dy\right)dx<+\infty.
\end{align*}
\end{rem}

We are now concerned with the computation of the operator \eqref{eq1} acting on barrier type functions. The result of the next Lemma is partially known, in particular the fact that for $\alpha=s$, $\left(x_{N}\right)_{+}^{\alpha}$ is harmonic  see e.g. \cite{BV,ROS2}. But a complete result for $\alpha\in (0,2s)$ doesn't seem to be available, so we decided to put here the statement and a short proof.

\begin{lem}\label{2-3}
Let $0<\alpha<2s$ and $w_{\alpha}\left(x\right)=\left(x_{N}\right)_{+}^{\alpha}$. For any $x\in\R^{N}_{+}$,
\begin{align*}
Lw_{\alpha}\left(x\right)=C_{\alpha}x_{N}^{\alpha-2s},
\end{align*}
where
\begin{align*}
C_{\alpha}
\left\lbrace 
\begin{aligned}
&<0,&&0<\alpha<s,\\
&=0,&&\alpha=s,\\
&>0,&&s<\alpha<2s.
\end{aligned}
\right.
\end{align*}
\end{lem}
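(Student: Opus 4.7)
The plan is to exploit the translation and scaling invariances of $L$ and of $w_\alpha$ to reduce the computation to a single one-dimensional integral, which is then recognized as (the opposite of) the 1D fractional Laplacian of the classical barrier $(x)_+^\alpha$ at $x=1$.

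Since $L$ has constant coefficients and $w_\alpha(x)=(x_N)_+^\alpha$ depends only on $x_N$, the function $Lw_\alpha$ is translation invariant in the horizontal directions, so it depends only on $x_N$. Changing variable $z=\lambda y$ in \eqref{eq1} gives the scaling $L\bigl(u(\lambda\,\cdot\,)\bigr)(x)=\lambda^{2s}(Lu)(\lambda x)$ for $\lambda>0$; combined with $w_\alpha(\lambda x)=\lambda^\alpha w_\alpha(x)$ this yields $Lw_\alpha(\lambda x)=\lambda^{\alpha-2s}Lw_\alpha(x)$. Setting $\lambda=x_N$ and evaluating at $e_N$ proves the homogeneity $Lw_\alpha(x)=C_\alpha\,x_N^{\alpha-2s}$ with $C_\alpha:=Lw_\alpha(e_N)$.

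To compute $C_\alpha$ explicitly I pass to spherical coordinates $y=r\theta$ and substitute $t=r|\theta_N|$ on $\{\theta_N\neq 0\}$ (the equatorial fiber contributes nothing, as the numerator vanishes there). Using the evenness of the numerator in $\theta_N$, a routine calculation yields the factorization
\begin{align*}
C_\alpha=\Bigl(\int_{\mathbb{S}^{N-1}}a(\theta)|\theta_N|^{2s}\,d\theta\Bigr)\cdot J(\alpha),\qquad J(\alpha):=\int_0^\infty\frac{(1+t)^\alpha+(1-t)_+^\alpha-2}{t^{1+2s}}\,dt.
\end{align*}
By \eqref{1-0-2}, $a\geq d>0$ on the relatively open cap $\Sigma_{\nu_0,\tau_0}\cap\mathbb{S}^{N-1}$, which has strictly positive $(N-1)$-dimensional Hausdorff measure, while the equator $\{\theta_N=0\}$ is negligible; hence the spherical prefactor is strictly positive and $\mathrm{sign}(C_\alpha)=\mathrm{sign}(J(\alpha))$.

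The main obstacle is therefore the sign of the one-dimensional integral $J(\alpha)$. Observe that, up to a positive constant, $J(\alpha)$ is exactly $-(-d^2/dx^2)^s(x)_+^\alpha\bigr|_{x=1}$; by the same scaling argument $(-d^2/dx^2)^s(x)_+^\alpha=\kappa(\alpha)\,x^{\alpha-2s}$ on $(0,\infty)$, so $J(\alpha)$ and $-\kappa(\alpha)$ share the same sign. The classical $s$-harmonicity of $(x)_+^s$ on the positive half-line (recalled in \cite{BV,ROS2}) gives $\kappa(s)=0$, hence $J(s)=0$. For $\alpha\ne s$ I would obtain the sign by making $\kappa(\alpha)$ explicit: split $J(\alpha)=\int_0^1+\int_1^\infty$, expand $(1\pm t)^\alpha$ via the binomial series on $(0,1)$, substitute $t\mapsto 1/t$ in the tail to recast it as a Beta integral, and recombine the two pieces using the Gamma reflection formula to isolate a strictly positive factor times $\sin(\pi(\alpha-s))$. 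Since $\sin(\pi(\alpha-s))<0$ on $(0,s)$ and $>0$ on $(s,2s)$, the stated sign pattern for $C_\alpha$ follows at once.
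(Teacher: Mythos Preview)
Your proof is correct and follows essentially the same route as the paper: both factor $C_\alpha=\bigl(\int_{\mathbb{S}^{N-1}}|\theta_N|^{2s}a(\theta)\,d\theta\bigr)\cdot c_\alpha$ with $c_\alpha=J(\alpha)$ the same one-dimensional integral, observe that the spherical factor is strictly positive by \eqref{1-0-2}, and then invoke the known sign pattern of $c_\alpha$. The only cosmetic differences are that the paper derives the homogeneity by direct computation in spherical coordinates rather than via your abstract scaling argument, and for the sign of $c_\alpha$ it simply cites \cite{BV,BGS} instead of sketching the Beta/Gamma-reflection manipulation you outline.
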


\begin{proof}
For any $x\in\R^{N}_{+}$,
\begin{align*}
Lw_{\alpha}\left(x\right)
&=\int_{\R^{N}}\frac{\left(x_{N}+y_{N}\right)_{+}^{\alpha}+\left(x_{N}-y_{N}\right)_{+}^{\alpha}-2x_{N}^{\alpha}}{\left|y\right|^{N+2s}}a\left(\frac{y}{\left|y\right|}\right)dy\\
&=\int_{\mathbb{S}^{N-1}}\left(\int_{0}^{+\infty}\frac{\left(x_{N}+r\theta_{N}\right)_{+}^{\alpha}+\left(x_{N}-r\theta_{N}\right)_{+}^{\alpha}-2x_{N}^{\alpha}}{r^{1+2s}}dr\right)a\left(\theta\right)d\theta\\
&=x_{N}^{\alpha}\int_{\mathbb{S}^{N-1}}\left(\int_{0}^{+\infty}\frac{\left(1+\frac{\theta_{N}}{x_{N}}r\right)_{+}^{\alpha}+\left(1-\frac{\theta_{N}}{x_{N}}r\right)_{+}^{\alpha}-2}{r^{1+2s}}dr\right)a\left(\theta\right)d\theta.
\end{align*}
By the change of variable $t=\frac{\left|\theta_{N}\right|}{x_{N}}r$, with $\theta_{N}\neq0$, we obtain
\begin{align*}
&\int_{0}^{+\infty}\frac{\left(1+\frac{\theta_{N}}{x_{N}}r\right)_{+}^{\alpha}+\left(1-\frac{\theta_{N}}{x_{N}}r\right)_{+}^{\alpha}-2}{r^{1+2s}}dr\\
&=\left(\frac{\left|\theta_{N}\right|}{x_{N}}\right)^{2s}\int_{0}^{+\infty}\frac{\left(1+t\right)_{+}^{\alpha}+\left(1-t\right)_{+}^{\alpha}-2}{t^{1+2s}}dt\\
&=:c_{\alpha}\left|\theta_{N}\right|^{2s}x_{N}^{-2s}.
\end{align*}
Moreover, the above equalities are still true for $\theta_{N}=0$.
Thus we have
\begin{align*}
Lw_{\alpha}\left(x\right)=C_\alpha x_{N}^{\alpha-2s}
\end{align*}
 with
$$
C_\alpha:=c_{\alpha}\left(\int_{\mathbb{S}^{N-1}}\left|\theta_{N}\right|^{2s}a\left(\theta\right)d\theta\right).
$$
By the assumption \eqref{1-0-2} it turns out that 
$$
\int_{\mathbb{S}^{N-1}}\left|\theta_{N}\right|^{2s}a\left(\theta\right)d\theta>0.
$$
Hence the sign of $C_\alpha$ is given by $c_\alpha$. For this, by the same arguments of \cite[Lemma 2.4]{BV}  concerning the case $\alpha=s$ (see also \cite[Lemma 2.3]{BGS}), we get that
\begin{align*}
c_{\alpha}
\left\lbrace 
\begin{aligned}
&<0,&&0<\alpha<s,\\
&=0,&&\alpha=s,\\
&>0,&&s<\alpha<2s.
\end{aligned}
\right.
\end{align*}
Hence the result follows.
\end{proof}

\begin{lem}\label{2-1}
Let $u,g,h\in C^{2}\left(\R_{+}^{N}\right)\cap\mathcal{L}_{s}$.
\begin{itemize}
\item[(1)] For any $R>0$ and $u_{R}=u\left(\frac{x}{R}\right)$,
\begin{align*}
Lu_{R}\left(x\right)
=R^{-2s}Lu\left(\frac{x}{R}\right)\quad\forall x\in\R_{+}^{N}.
\end{align*}
\item[(2)] For $u=gh$,
\begin{align*}
Lu\left(x\right)
=g\left(x\right)Lh\left(x\right)+h\left(x\right)Lg\left(x\right)+l\left[g,h\right]\left(x\right)\quad\forall x\in\R_{+}^{N},
\end{align*}
where
\begin{align*}
l\left[g,h\right]\left(x\right)=&\int_{\R^{N}}\left\{\frac{\left[g\left(x+y\right)-g\left(x\right)\right]\left[h\left(x+y\right)-h\left(x\right)\right]}{\left|y\right|^{N+2s}}\right.\\
&\left.+\,\frac{\left[g\left(x-y\right)-g\left(x\right)\right]\left[h\left(x-y\right)-h\left(x\right)\right]}{\left|y\right|^{N+2s}}\right\}a\left(\frac{y}{\left|y\right|}\right)dy.
\end{align*}
\end{itemize}
\end{lem}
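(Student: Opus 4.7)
The plan is that both identities reduce to mechanical manipulations: a change of variables for (1), and an algebraic expansion followed by term-by-term integration for (2). I treat them in turn.

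For (1), I start from the definition of $Lu_R$ and substitute $y = Rz$ (so $dy = R^N dz$ and $|y|^{N+2s} = R^{N+2s}|z|^{N+2s}$). Since the angular density $a$ is $0$-homogeneous, one has $a(y/|y|) = a(z/|z|)$, so the powers of $R$ collapse into a single factor $R^{-2s}$ and the remaining integrand is precisely the one defining $Lu(x/R)$. No further subtlety arises: absolute integrability of the original integrand is guaranteed by the hypothesis $u \in C^{2}(\R^N_+) \cap \mathcal{L}_{s}$, and the substitution is then a standard one.

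For (2), the key step is the pointwise algebraic identity (with $\Delta_\pm f(y) := f(x\pm y) - f(x)$)
\begin{align*}
&g(x+y)h(x+y) + g(x-y)h(x-y) - 2g(x)h(x) \\
&\quad = g(x)\bigl[h(x+y)+h(x-y)-2h(x)\bigr] + h(x)\bigl[g(x+y)+g(x-y)-2g(x)\bigr]\\
&\qquad + \Delta_+ g\,\Delta_+ h + \Delta_- g\,\Delta_- h,
\end{align*}
which is obtained by expanding $g(x\pm y)h(x\pm y) = (g(x)+\Delta_\pm g)(h(x)+\Delta_\pm h)$ and summing over the two signs. Dividing by $|y|^{N+2s}$, multiplying by $a(y/|y|)$, and integrating over $\R^N$, the three groupings on the right produce exactly $g(x)Lh(x)$, $h(x)Lg(x)$, and $l[g,h](x)$ respectively, which is the claimed formula.

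The one point requiring care is that the three resulting integrals are each absolutely convergent, so that linearity of the integral may be invoked to split the whole. Near $y=0$, the $C^2$-regularity of $g$ and $h$ gives second differences of order $|y|^2$ and first differences of order $|y|$, so every integrand is bounded by $C|y|^{2-N-2s}$, which is integrable since $2s<2$. At infinity, the two pieces $g(x)Lh(x)$ and $h(x)Lg(x)$ inherit convergence directly from the assumption $g,h\in\mathcal{L}_s$. The main (mild) obstacle is $l[g,h]$, because the crude bound $|\Delta_+ g\,\Delta_+ h| = O(|y|^{2(2s-\delta)})$ divided by $|y|^{N+2s}$ is not absolutely integrable at infinity for arbitrary $\delta\in(0,2s]$; this is addressed by applying the lemma only in the setting to be considered later, where one of the factors $g,h$ will be compactly supported (a test function), making the tail integrability trivial.
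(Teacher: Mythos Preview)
Your proof is correct and follows exactly the paper's route: the change of variables $y=Rz$ for (1), and the pointwise algebraic expansion of the second difference of a product for (2). The paper's argument is terser and does not pause on integrability at all.

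Your added integrability discussion is a good reflex, but the final caveat about $l[g,h]$ is unnecessary under the stated hypotheses. You assumed only the crude tail bound $|\Delta_\pm g\,\Delta_\pm h|=O(|y|^{2(2s-\delta)})$, which can indeed fail to be integrable; however the lemma also assumes $u=gh\in\mathcal{L}_s$. Since the integrand defining $l[g,h]$ equals, pointwise in $y$, the integrand of $Lu$ minus $g(x)$ times that of $Lh$ minus $h(x)$ times that of $Lg$, and each of these three is absolutely integrable over $\R^N$ by the $C^2\cap\mathcal{L}_s$ assumption on $u$, $h$, $g$ respectively, their difference is absolutely integrable as well. So no restriction to compactly supported factors is needed for the lemma as stated.
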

\begin{proof}
For any $R>0$,
\begin{align*}
Lu_{R}\left(x\right)
&=\int_{\R^{N}}\frac{u\left(\frac{x+y}{R}\right)+u\left(\frac{x-y}{R}\right)-2u\left(\frac{x}{R}\right)}{\left|y\right|^{N+2s}}a\left(\frac{y}{\left|y\right|}\right)dy\\
&=R^{-2s}\int_{\R^{N}}\frac{u\left(\frac{x}{R}+y\right)+u\left(\frac{x}{R}-y\right)-2u\left(\frac{x}{R}\right)}{\left|y\right|^{N+2s}}a\left(\frac{y}{\left|y\right|}\right)dy\\
&=R^{-2s}Lu\left(\frac{x}{R}\right),
\end{align*}
hence we get Lemma \ref{2-1}-(1). Next, for $x\in\R_{+}^{N}$ and any $y\in\R^{N}$, 
\begin{align*}
g&\left(x+y\right)h\left(x+y\right)+g\left(x-y\right)h\left(x-y\right)-2g\left(x\right)h\left(x\right)\\
&=\left[g\left(x+y\right)+g\left(x-y\right)-2g\left(x\right)\right]h\left(x\right)+\left[h\left(x+y\right)+h\left(x-y\right)-2h\left(x\right)\right]g\left(x\right)\\
&\quad+\left[g\left(x+y\right)-g\left(x\right)\right]\left[h\left(x+y\right)-h\left(x\right)\right]+\left[g\left(x-y\right)-g\left(x\right)\right]\left[h\left(x-y\right)-h\left(x\right)\right].
\end{align*}
Lemma \ref{2-1}-(2) is a direct result of the above identity.
\end{proof}

We conclude this section with a continuity property of the function $l$ introduced in Lemma \ref{2-1}-(2) that will be used in Theorem \ref{th1}.

\begin{lem}\label{3-2}
Let $(2s-1)_+<\alpha<\min\left\{1,2s\right\}$, $\varphi\in C^{1}_{0}\left(\R^{N}\right)$ and $w_{\alpha}\left(x\right)=\left(x_{N}\right)_{+}^{\alpha}$. For any $\left\{x_{n}\right\}\subset\R_{+}^{N}$ and $x_{n}\rightarrow x_{0}\in\overline{\R_{+}^{N}}$ as $n\rightarrow+\infty$, we have
\begin{align*}
\lim_{n\rightarrow+\infty}l\left[w_{\alpha},\varphi\right]\left(x_{n}\right)
=l\left[w_{\alpha},\varphi\right]\left(x_{0}\right).
\end{align*}
\end{lem}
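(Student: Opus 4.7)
The plan is to apply Lebesgue's dominated convergence theorem to the integral defining $l[w_{\alpha},\varphi](x_n)$, producing an integrable majorant that is uniform in $n$ and showing pointwise convergence of the integrand.

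First I would exploit the regularity of the two factors. Since $0<\alpha<1$, the map $t\mapsto (t)_+^{\alpha}$ is $\alpha$-H\"older continuous on $\R$, which yields the translation-invariant estimate
$$
|w_{\alpha}(x\pm y)-w_{\alpha}(x)|\leq |y_N|^{\alpha}\leq |y|^{\alpha}\qquad\forall\,x,y\in\R^N.
$$
For $\varphi\in C^1_0(\R^N)$ I would combine the Lipschitz bound $|\varphi(x\pm y)-\varphi(x)|\leq \|\nabla\varphi\|_{L^\infty}|y|$ with the trivial one $2\|\varphi\|_{L^\infty}$. Together with $a(\theta)\leq D$ from \eqref{1-0-1}, each of the two summands appearing in the integrand of $l[w_{\alpha},\varphi](x_n)$ is controlled, in modulus and \emph{uniformly} in $n$, by
$$
C\,\frac{|y|^{\alpha}\min\{|y|,1\}}{|y|^{N+2s}},
$$
where $C=C(D,\|\varphi\|_{C^1(\R^N)})$ does not depend on $x_n$. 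Near the origin this bound reduces to $C|y|^{\alpha+1-N-2s}$, integrable thanks to the hypothesis $\alpha>(2s-1)_+$; at infinity it becomes $C|y|^{\alpha-N-2s}$, integrable because $\alpha<2s$. The two endpoint conditions on $\alpha$ thus play exactly their expected role.

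Next I would verify pointwise convergence of the integrand. The continuity of both $w_\alpha$ on all of $\R^N$ (including across $\partial\R^N_+$, where $w_{\alpha}$ vanishes) and of $\varphi$ imply that, for every fixed $y\in\R^N\setminus\{0\}$, the integrand at $x_n$ converges to the one at $x_0$. Dominated convergence then gives $l[w_\alpha,\varphi](x_n)\to l[w_\alpha,\varphi](x_0)$.

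No serious obstacle arises. The one point that needs attention is that the majorant must truly be independent of $n$, which is automatic here because the H\"older estimate for $w_{\alpha}$ is translation-free, so no a priori bound on $|x_n|$ ever enters; and the possibility that $x_0\in\partial\R^N_+$ is harmless since continuity of $w_\alpha$ across the hyperplane $\{x_N=0\}$ is all that is required.
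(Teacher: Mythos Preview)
Your proposal is correct and follows essentially the same approach as the paper: the paper also bounds the integrand by using the $\alpha$-H\"older estimate $|((x_n)_N\pm y_N)_+^\alpha-(x_n)_N^\alpha|\leq|y_N|^\alpha$ together with the Lipschitz/bounded estimates for $\varphi$, obtains the same majorant (integrable precisely under the hypotheses $(2s-1)_+<\alpha<2s$), and concludes via dominated convergence.
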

\begin{proof}
By Lemma \ref{2-1}-(2),
\begin{align*}
l\left[w_{\alpha},\varphi\right]\left(x_{n}\right)
&=\int_{\R^{N}}\left\{\frac{\left[\left(\left(x_{n}\right)_{N}+y_{N}\right)_{+}^{\alpha}-\left(x_{n}\right)_{N}^{\alpha}\right]\left[\varphi\left(x_{n}+y\right)-\varphi\left(x_{n}\right)\right]}{\left|y\right|^{N+2s}}\right.\\
&\quad\left.+\,\frac{\left[\left(\left(x_{n}\right)_{N}-y_{N}\right)_{+}^{\alpha}-\left(x_{n}\right)_{N}^{\alpha}\right]\left[\varphi\left(x_{n}-y\right)-\varphi\left(x_{n}\right)\right]}{\left|y\right|^{N+2s}}\right\}a\left(\frac{y}{\left|y\right|}\right)dy\\
&:=\int_{\R^{N}}H_{n}\left(y\right)dy.
\end{align*}

Since $0<\alpha<1$, for any $n\in\N$ and $y\in\R^N$ we have
\begin{align*}
\left|\left(\left(x_{n}\right)_{N}\pm y_{N}\right)_{+}^{\alpha}-\left(x_{n}\right)_{N}^{\alpha}\right|\leq \left|y_{N}\right|^{\alpha}.
\end{align*}
Moreover,
\begin{align*}
\left|\varphi\left(x_{n}\pm y\right)-\varphi\left(x_{n}\right)\right|\leq C\left(\left\|\varphi\right\|_{C^{1}\left(\R^{N}\right)}\right)
\times
\left\lbrace 
\begin{aligned}
&|y|,&&\left|y\right|<1,\\
&1,&&\left|y\right|\geq1.
\end{aligned}
\right.
\end{align*}

Hence we get
\begin{align*}
\left|H_{n}\left(y\right)\right|
\leq H\left(y\right)=C\left(D,\left\|\varphi\right\|_{C^{1}\left(\R^{N}\right)}\right)\times
\left\lbrace 
\begin{aligned}
&\frac{1}{\left|y\right|^{N+2s-\alpha-1}},&&\left|y\right|<1,\\
&\frac{1}{\left|y\right|^{N+2s-\alpha}},&&\left|y\right|\geq1.
\end{aligned}
\right.
\end{align*}
By the assumption $2s-1<\alpha<2s$, we have $H\in L^{1}\left(\R^{N}\right)$. By Lebesgue's dominated convergence theorem, we reach the conclusion.
\end{proof}

\section{Nonexistence in the half space \texorpdfstring{$\R_{+}^{N}$}{Lg}}\label{s3}


This section is devoted to the proof of Theorem \ref{th1}. Observe that the result and the proof hold true for $N \geq 1$, but it will be assumed $N\geq 2$, the case $N = 1$ being much simpler. Indeed, the operators \eqref{eq1} satisfying \eqref{1-0-1}-\eqref{1-0-2} in dimension $N=1$ reduce, up to a positive multiplicative constant, to the $1$-dimensional fractional Laplacian and the corresponding proof of Theorem \ref{th1} can be done in a similar way to the case $N\geq 2$, but every step is much simpler, details are left to the reader.

 We start with some geometric considerations concerning the cones $\Sigma_{\nu,\tau}\left(x\right)$, see \eqref{cone} for their definition. Next lemma is basic, we report a proof for the reader's convenience.

\begin{lem}\label{3-1}
Given $\nu\in{\mathbb S}^{N-1}$ and $\tau\in(0,1]$, then for any $x\in\partial B_{1}$ it holds
\begin{align*}
\left|\Sigma_{\nu,\tau}\left(x\right)\cap B_{1}\right|>0.
\end{align*}
\begin{proof}
If $x\cdot\nu\neq0$, there exists $t_{0}\in\R$ and $\varepsilon_{0}>0$ such that $y_{0}=x+t_{0}\nu\in\Sigma_{\nu,\tau}\left(x\right)\cap B_{1}$ and $B_{\varepsilon_{0}}\left(y_{0}\right)\subset\left(\Sigma_{\nu,\tau}\left(x\right)\cap B_{1}\right)$, hence
\begin{align*}
\left|\Sigma_{\nu,\tau}\left(x\right)\cap B_{1}\right|>\left|B_{\varepsilon_{0}}\left(y_{0}\right)\right|>0.
\end{align*}
If $x\cdot\nu=0$, then we can pick $\widetilde{\nu}\in\mathbb{S}^{N-1}$ such that $x\cdot\widetilde{\nu}\neq0$ and $0<\left|\widetilde{\nu}-\nu\right|\leq\frac{\tau}{2}$. For any $y\in\Sigma_{\widetilde{\nu},\frac{\tau}{2}}\left(x\right)$ we have
\begin{align*}
\left|\left(y-x\right)\cdot\nu\right|
&\geq\left|\left(y-x\right)\cdot\widetilde{\nu}\right|-\left|\left(y-x\right)\cdot\left(\widetilde{\nu}-\nu\right)\right|\\
&\geq\left(1-\frac{\tau}{2}\right)\left|y-x\right|-\frac{\tau}{2}\left|y-x\right|=\left(1-\tau\right)\left|y-x\right|.
\end{align*}
Then $\Sigma_{\widetilde{\nu},\frac{\tau}{2}}\left(x\right)\subseteq\Sigma_{\nu,\tau}\left(x\right)$ and by the first part of the proof we conclude that 
\begin{align*}
\left|\Sigma_{\nu,\tau}\left(x\right)\cap B_{1}\right|\geq\left|\Sigma_{\widetilde{\nu},\frac{\tau}{2}}\left(x\right)\cap B_{1}\right|>0.
\end{align*}
\end{proof}
\end{lem}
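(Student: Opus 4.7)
The plan is to argue geometrically by a case split on whether the axis direction $\nu$ is tangential to $\partial B_{1}$ at the base point $x$. Since $\tau>0$, the cone $\Sigma_{\nu,\tau}(x)$ has strictly positive aperture, so it contains a full open neighborhood of the ray $\{x+t\nu:t\in\R\}$; the whole point is therefore to find even a tiny segment of (a translate of) this axis that lies strictly inside $B_{1}$.

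First I would dispose of the generic case $x\cdot\nu\neq 0$. Writing $|x+t\nu|^{2}=1+2t(x\cdot\nu)+t^{2}$, one sees that, depending on the sign of $x\cdot\nu$, either all sufficiently small negative $t$ or all sufficiently small positive $t$ give $|x+t\nu|<1$. Hence there is some $t_{0}\neq 0$ with $y_{0}:=x+t_{0}\nu$ in the open ball $B_{1}$. Since $y_{0}$ lies strictly on the axis of $\Sigma_{\nu,\tau}(x)$ (so strictly interior to it in the ambient sense), and strictly inside $B_{1}$, a small ball $B_{\varepsilon_{0}}(y_{0})$ is contained in $\Sigma_{\nu,\tau}(x)\cap B_{1}$, giving positive measure.

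The only real obstacle is the tangential case $x\cdot\nu=0$, where the axis line is tangent to $\partial B_{1}$ at $x$ and hence enters $B_{1}$ at no point. The trick I would use here is to perturb the axis: pick $\widetilde{\nu}\in\mathbb{S}^{N-1}$ with $\widetilde{\nu}\cdot x\neq 0$ and $|\widetilde{\nu}-\nu|\leq\tau/2$, and then check that the narrower cone $\Sigma_{\widetilde{\nu},\tau/2}(x)$ is contained in $\Sigma_{\nu,\tau}(x)$. The containment follows from the estimate
\[
|(y-x)\cdot\nu|\geq|(y-x)\cdot\widetilde{\nu}|-|(y-x)\cdot(\widetilde{\nu}-\nu)|\geq\left(1-\tfrac{\tau}{2}\right)|y-x|-\tfrac{\tau}{2}|y-x|=(1-\tau)|y-x|,
\]
valid for every $y\in\Sigma_{\widetilde{\nu},\tau/2}(x)$. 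Applying the previous case to $\Sigma_{\widetilde{\nu},\tau/2}(x)$ then yields $|\Sigma_{\nu,\tau}(x)\cap B_{1}|\geq|\Sigma_{\widetilde{\nu},\tau/2}(x)\cap B_{1}|>0$, finishing the lemma. The argument is elementary and relies only on the fact that a two-fold cone of positive aperture is a full-dimensional set around its axis, so no genuine technical difficulty arises beyond the tangency workaround.
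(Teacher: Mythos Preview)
Your proposal is correct and follows essentially the same approach as the paper's proof: the same case split on $x\cdot\nu\neq0$ versus $x\cdot\nu=0$, the same axis-perturbation $\widetilde{\nu}$ with $|\widetilde{\nu}-\nu|\leq\tau/2$ in the tangential case, and the identical estimate showing $\Sigma_{\widetilde{\nu},\tau/2}(x)\subseteq\Sigma_{\nu,\tau}(x)$. Your explicit computation $|x+t\nu|^{2}=1+2t(x\cdot\nu)+t^{2}$ just spells out why the paper's $t_{0}$ exists in the first case.
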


\begin{rem}
In the whole space $\R^{N}$, for any $x\in\partial B_{1}$, it is obvious that 
$$\left|\Sigma_{\nu,\tau}\left(x\right)\cap B_{1}\cap\R^N\right|=\left|\Sigma_{\nu,\tau}\left(x\right)\cap B_{1}\right|>0$$ 
as showed in Lemma \ref{3-1}, see also Figure \ref{figure2}. 
\begin{figure}[htbp]
\centering
\includegraphics[width=0.5\textwidth]{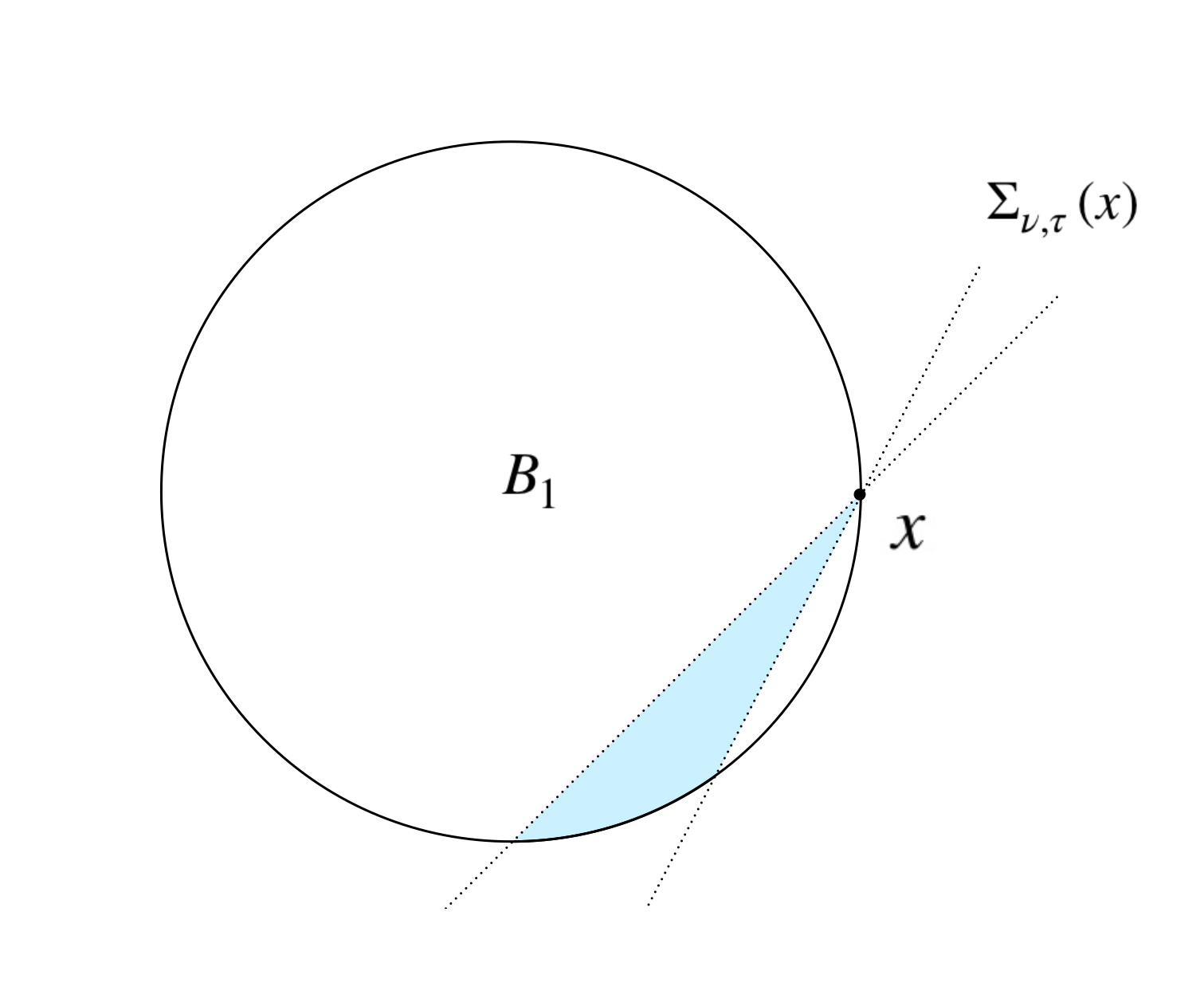}
\caption{The blue area $\left|\Sigma_{\nu,\tau}\left(x\right)\cap B_{1}\right|>0$.}
\label{figure2}
\end{figure}

However, in the half space $\R_{+}^{N}$, there might be that for some $x\in\partial\R_{+}^{N}\cap \partial B_{1}$
\begin{align*}
\left|\Sigma_{\nu,\tau}\left(x\right)\cap B_{1}\cap\R^{N}_{+}\right|=0,
\end{align*}
see Figure \ref{figure3}.
\newline
Our strategy is therefore the following: since, as a consequence of Lemma \ref{3-1}, $$\left|\Sigma_{\nu,\tau}\left(0\right)\cap B_{1}\left(e_{N}\right)\right|>0,$$ we can then slightly move the center of the ball $B_{1}\left(e_{N}\right)$ from $e_{N}=(0,\ldots,0,1)$ to $\left(1-\gamma\right)e_{N}=\left(0,...,0,1-\gamma\right)$, with $\gamma>0$ sufficiently small, in order to guarantee that
\begin{align*}
\left|B_{1}\left(\left(1-\gamma\right)e_{N}\right)\cap{\R^{N}_{-}}\right|>0
\end{align*}
and that for any $x\in\partial\R_{+}^{N}\cap \partial B_{1}\left(\left(1-\gamma\right)e_{N}\right)$
\begin{align}\label{eq8}
\left|\Sigma_{\nu,\tau}\left(x\right)\cap B_{1}\left(\left(1-\gamma\right)e_{N}\right)\cap\R^{N}_{+}\right|>0,
\end{align}
see Figure \ref{figure4}. In fact,  condition \eqref{eq8} is still true for any $x\in\partial B_{1}\left(\left(1-\gamma\right)e_{N}\right)\cap\overline{\R^{N}_{+}}$.
\begin{figure}[htbp]
\centering
\includegraphics[width=0.6\textwidth]{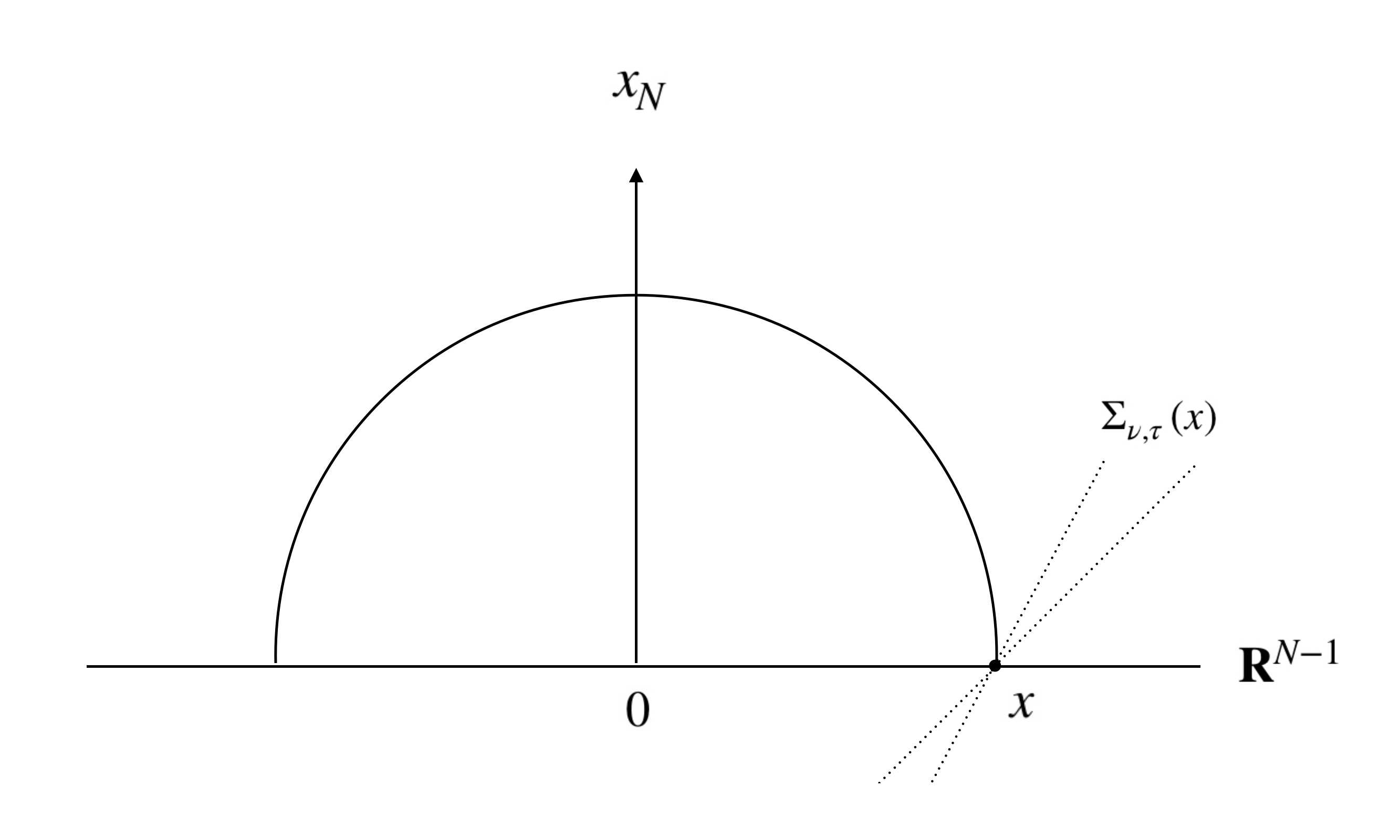}
\caption{$\left|\Sigma_{\nu,\tau}\left(x\right)\cap B_{1}\cap\R^{N}_{+}\right|=0$.}
\label{figure3}
\end{figure}
\begin{figure}[htbp]
\centering
\includegraphics[width=0.6\textwidth]{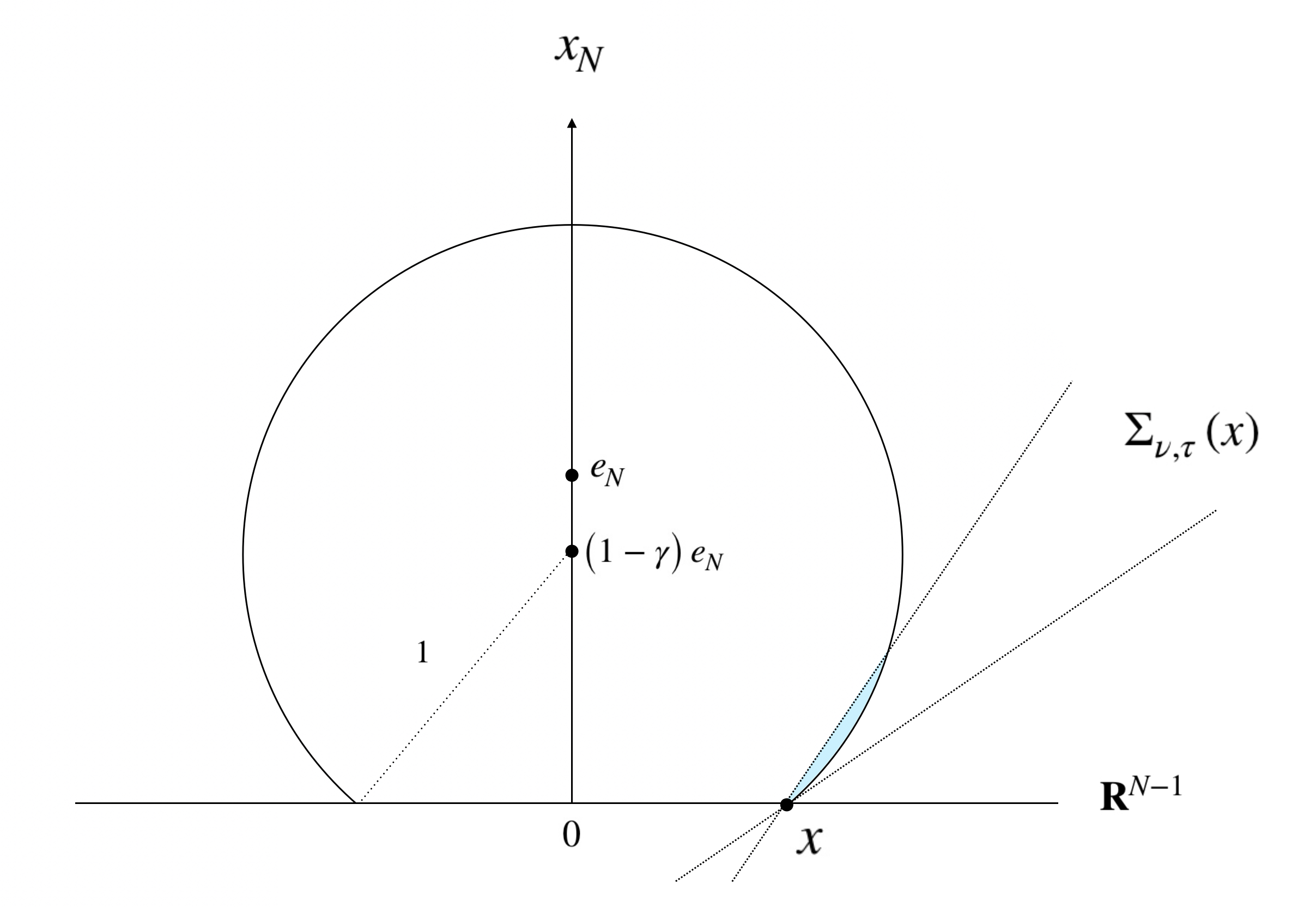}
\caption{The blue area $\left|\Sigma_{\nu,\tau}\left(x\right)\cap B_{1}\left(\left(1-\gamma\right)e_{N}\right)\cap\R^{N}_{+}\right|>0$.}
\label{figure4}
\end{figure}
\end{rem}

The above considerations are summarized in the following

\begin{cor}\label{3-5}
Given $\nu\in{\mathbb S}^{N-1}$ and $\tau\in(0,1]$, then there exist $\gamma=\gamma\left(\nu,\tau\right)\in\left(0,1\right)$ such that for any $x\in\partial B_{1}\left(\left(1-\gamma\right)e_{N}\right)\cap\overline{\R^{N}_{+}}$,
\begin{align*}
\left|\Sigma_{\nu,\tau}\left(x\right)\cap B_{1}\left(\left(1-\gamma\right)e_{N}\right)\cap\R^{N}_{+}\right|>0.
\end{align*}
\end{cor}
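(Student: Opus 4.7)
The plan is to deduce the corollary from Lemma \ref{3-1} by a perturbation argument, letting $\gamma \to 0^+$ and reducing to the limiting case $\gamma = 0$, where the ball $B_1(e_N)$ sits entirely in $\overline{\R^N_+}$ (touching $\partial \R^N_+$ only at the origin).

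First I would settle the baseline $\gamma = 0$. Since the open ball $B_1(e_N) \subset \R^N_+$, for every $x_0 \in \partial B_1(e_N)$ Lemma \ref{3-1}, applied to the translated unit ball (using translation invariance of the cone $\Sigma_{\nu,\tau}(\cdot)$ and of Lebesgue measure), gives
\[
|\Sigma_{\nu,\tau}(x_0) \cap B_1(e_N) \cap \R^N_+| \;=\; |\Sigma_{\nu,\tau}(x_0) \cap B_1(e_N)| \;>\; 0.
\]

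Next I would introduce the auxiliary function
\[
h(x,\gamma) \;:=\; |\Sigma_{\nu,\tau}(x) \cap B_1((1-\gamma)e_N) \cap \R^N_+|
\]
on $\R^N \times [0,1)$ and argue that it is jointly continuous. Both $\Sigma_{\nu,\tau}(x) = x + \Sigma_{\nu,\tau}(0)$ and $B_1((1-\gamma)e_N)$ depend continuously on their parameters, and their topological boundaries are Lebesgue-null in $\R^N$, so the indicator $\chi_{\Sigma_{\nu,\tau}(x)}\,\chi_{B_1((1-\gamma)e_N)}\,\chi_{\R^N_+}$ converges almost everywhere under any perturbation of $(x,\gamma)$; since these indicators are uniformly supported in the fixed bounded set $B_2(e_N)$, dominated convergence yields continuity of $h$.

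The conclusion would then follow by contradiction. If no admissible $\gamma$ existed, there would be a sequence $\gamma_n \downarrow 0$ and points $x_n \in \partial B_1((1-\gamma_n)e_N) \cap \overline{\R^N_+}$ with $h(x_n,\gamma_n) = 0$. The $x_n$ stay in the bounded region $\overline{B_2(0)}$, so along a subsequence $x_n \to x_0$ with $|x_0 - e_N| = 1$, i.e.\ $x_0 \in \partial B_1(e_N)$; by continuity $h(x_0,0) = 0$, contradicting the base case. The main delicacy I anticipate is establishing joint continuity of $h$ near the equator $\partial B_1((1-\gamma)e_N) \cap \partial \R^N_+$, where the cross-section of the ball with $\R^N_+$ reshapes abruptly as $\gamma$ crosses $0$, but the null-measure of all relevant boundaries makes dominated convergence applicable.
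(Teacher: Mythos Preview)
Your proof is correct and follows the same perturbation idea sketched in the paper's Remark preceding the corollary (the paper gives no separate formal proof, treating the corollary as a summary of that discussion). Your compactness/continuity argument for $h(x,\gamma)$ makes rigorous what the paper leaves as a geometric heuristic: that the positive measure guaranteed by Lemma~\ref{3-1} at $\gamma=0$ survives a small displacement of the center.
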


Next theorem is the main result from which Theorem \ref{th1} easily follows, as proved at the end of this section.

\begin{thm}\label{3-3}
Assume $1\leq p\leq\frac{N+s}{N-s}$. If $u\in C^{2}\left(\R_{+}^{N}\right)\cap\mathcal{L}_{s}$ is a nonnegative solution of \eqref{2-5-1}   satisfying \eqref{2-0-1}, then $u\equiv0$.
\end{thm}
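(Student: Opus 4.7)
The plan is to adapt the rescaled test function method to the half-space with conical diffusion. The key building block is the $L$-harmonic profile $w_s(x)=(x_N)_+^s$, which by Lemma~\ref{2-3} satisfies $Lw_s\equiv 0$ in $\R_+^N$; this plays the role of the first eigenfunction in the classical Berestycki--Capuzzo Dolcetta--Nirenberg scheme. I fix a nonnegative cutoff $\phi\in C^2_0(\R^N)$, concretely a high power of a smooth bump, with $\text{supp}(\phi)\subset B_1((1-\gamma)e_N)$ for the $\gamma$ produced by Corollary~\ref{3-5}. The placement of the support is not decorative: by \eqref{eq8}, from every point of the support the cone of effective diffusion $\Sigma_{\nu_0,\tau_0}(x)$ meets $\text{supp}(\phi)\cap\R_+^N$ on a set of positive measure, which is exactly what ensures that $L\phi$ and $l[w_s,\phi]$ remain controlled up to $\partial\R_+^N$. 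Setting $\phi_R(x)=\phi(x/R)$ and $v_R=w_s\phi_R$, the family $\{v_R\}$ increases to $w_s$ on each compact subset of $\R_+^N$.

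Testing $-Lu\geq u^p$ against $v_R\geq 0$ over $\R^N$ (equivalently over $\R_+^N$, since $u\equiv 0$ on $\overline{\R_-^N}$) and using the integration-by-parts identity of Proposition~\ref{2-4} (applicable because $(2s-1)_+<s<2s$) yields
\begin{equation*}
\int u^p v_R\,dx \leq -\int u\,Lv_R\,dx.
\end{equation*}
By Lemma~\ref{2-1}-(2) combined with $Lw_s=0$,
\begin{equation*}
Lv_R = w_s\,L\phi_R + l[w_s,\phi_R].
\end{equation*}
The scaling identities $v_R(x)=R^s v_1(x/R)$ and (by Lemma~\ref{2-1}-(1) and the $s$-homogeneity of $w_s$) $Lv_R(x)=R^{-s}Lv_1(x/R)$ make the weighted H\"older inequality
\begin{equation*}
\int u(-Lv_R)\,dx \leq \Bigl(\int u^p v_R\,dx\Bigr)^{\!1/p}\Bigl(\int v_R^{-1/(p-1)}|Lv_R|^{p/(p-1)}\,dx\Bigr)^{\!(p-1)/p}
\end{equation*}
collapse, after the change of variable $y=x/R$, into
\begin{equation*}
\int u^p v_R\,dx \leq C_\phi\,R^{\,N-s(p+1)/(p-1)},\qquad C_\phi=\int v_1^{-1/(p-1)}|Lv_1|^{p/(p-1)}\,dy.
\end{equation*}
The exponent $N-s(p+1)/(p-1)$ is $\leq 0$ exactly when $p\leq\frac{N+s}{N-s}$, and strictly negative when $p<\frac{N+s}{N-s}$.

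In the strictly subcritical range $1<p<\frac{N+s}{N-s}$ the right-hand side vanishes as $R\to\infty$, and Fatou's lemma applied to $v_R\nearrow w_s$ on $\R_+^N$ then forces $u\equiv 0$. At the critical exponent $p=\frac{N+s}{N-s}$ the bound yields only the uniform information $\int_{\R_+^N}u^p w_s\,dx<\infty$; the argument is then refined by restricting the right-hand side of the H\"older estimate to the annular region where $L\phi_R$ and $l[w_s,\phi_R]$ are effectively supported (the transition zone of $\phi_R$), and the global integrability just extracted makes this contribution $o(1)$, forcing $u\equiv 0$. The case $p=1$ is handled by the same scaling without H\"older, directly estimating $\int u|Lv_R|$. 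The main obstacle is to ensure $C_\phi<\infty$: near $\partial\R_+^N$ the weight $v_R^{-1/(p-1)}$ produces a boundary singularity $\sim x_N^{-s/(p-1)}$, whereas $l[w_s,\phi]$ a priori only has the continuity up to the boundary guaranteed by Lemma~\ref{3-2}. To compensate, $\phi$ must be taken as a sufficiently high power of a smooth bump so that $v_1$ and the cancellations inside $l[w_s,\phi]$ deliver enough decay of $|Lv_1|$, and its support has to be positioned via Corollary~\ref{3-5} so that the conical diffusion does not degenerate on this support; together, these are the ad hoc test function construction announced in the introduction.
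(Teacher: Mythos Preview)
Your scheme has a genuine gap at the boundary $\partial\R_+^N$. Inside the region where $\phi\equiv 1$ you have, for $x\in\R_+^N$ with small $x_N$,
\[
Lv_1(x)=x_N^{s}\,L\phi(x)+l[w_s,\phi](x),
\]
and Lemma~\ref{3-2} gives only continuity of $l[w_s,\phi]$ up to $\partial\R_+^N$, not vanishing. In fact, at a point $x_\infty\in\partial\R_+^N$ with $\phi(x_\infty)=1$, since $\phi\leq 1$ and $(y_N)_+^s\geq 0$ one has
\[
l[w_s,\phi](x_\infty)=\int_{\R^N}\frac{(y_N)_+^s[\phi(x_\infty+y)-1]+(-y_N)_+^s[\phi(x_\infty-y)-1]}{|y|^{N+2s}}\,a\!\left(\tfrac{y}{|y|}\right)dy<0,
\]
so $(-Lv_1)(x)$ converges to a strictly positive constant as $x\to x_\infty$. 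Consequently $v_1^{-1/(p-1)}|Lv_1|^{p/(p-1)}\sim x_N^{-s/(p-1)}$ near $\partial\R_+^N$ and $C_\phi=+\infty$ whenever $1<p\leq 1+s$. Taking $\phi$ to be a high power of a bump does not help here: that trick produces vanishing of $\phi$ on $\partial B_1((1-\gamma)e_N)$, but on $\partial\R_+^N$ the cutoff is bounded away from zero and no power changes that. For the same reason, in your critical-case refinement the term $l[w_s,\phi_R]$ is \emph{not} supported only in the transition annulus of $\phi_R$; it is nonzero all along the strip near $\partial\R_+^N$.

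The paper circumvents this by abandoning the weighted H\"older with $|Lv_1|$ in favor of a \emph{pointwise} inequality. It introduces an auxiliary exponent $\alpha_0\in(s,\min\{1,2s\})$ and works with $\phi_{\alpha_0}(x)=(x_N)_+^{\alpha_0}\varphi(x)$ in addition to $\phi_s$. By Lemma~\ref{2-3}, $Lw_{\alpha_0}=C_{\alpha_0}x_N^{\alpha_0-2s}$ with $C_{\alpha_0}>0$, so $L\phi_{\alpha_0}$ carries a term that blows up to $+\infty$ on $\partial\R_+^N$; this dominates the merely bounded cross terms $l[w_{\alpha_0},\varphi]$ and $l[w_s,\varphi]$ and yields
\[
-L\phi_{\alpha_0}(x)-L\phi_s(x)\leq M\,\phi_s(x)\qquad\forall x\in\R_+^N.
\]
Corollary~\ref{3-5} is used to handle the support boundary $\partial B_1((1-\gamma)e_N)$ (your Cases ``on the sphere''), while the $\phi_{\alpha_0}$ term is what handles $\partial\R_+^N$; this second ingredient is the piece missing from your construction. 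From the pointwise bound one gets directly $\int u^p\phi_{s,R}\leq M R^{-2s}\int u\,\phi_{s,R}$, after which the ordinary H\"older inequality $\int u\,\phi_{s,R}\leq(\int u^p\phi_{s,R})^{1/p}(\int\phi_{s,R})^{(p-1)/p}$ suffices, with no singular weight appearing.
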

\begin{proof}
For the $\nu_{0},\tau_{0}$ given in \eqref{1-0-2}, by Corollary \ref{3-5}, there exists $0<\gamma_{0}<1$ such that for any $x\in\partial B_{1}\left(\left(1-\gamma_{0}\right)e_{N}\right)\cap\overline{\R^{N}_{+}}$,
\begin{align}\label{3-3-1}
\left|\Sigma_{\nu_{0},\tau_{0}}\left(x\right)\cap B_{1}\left(\left(1-\gamma_{0}\right)e_{N}\right)\cap\R^{N}_{+}\right|>0.
\end{align}
We choose $\varphi\in C^{\infty}_{0}\left(\R^{N}\right)$ such that  $0<\varphi\leq1$ in $B_{1}\left(\left(1-\gamma_{0}\right)e_{N}\right)$ and
\begin{align*}
\varphi\left(x\right)=
\left\lbrace 
\begin{aligned}
&1,&&x\in B_{1-\frac{\gamma_{0}}{2}}\left(\left(1-\gamma_{0}\right)e_{N}\right),\\
&0,&&x\notin B_{1}\left(\left(1-\gamma_{0}\right)e_{N}\right),
\end{aligned}
\right.
\end{align*}
see Figure \ref{figure5}. 
\begin{figure}[htbp]
\centering
\includegraphics[width=0.7\textwidth]{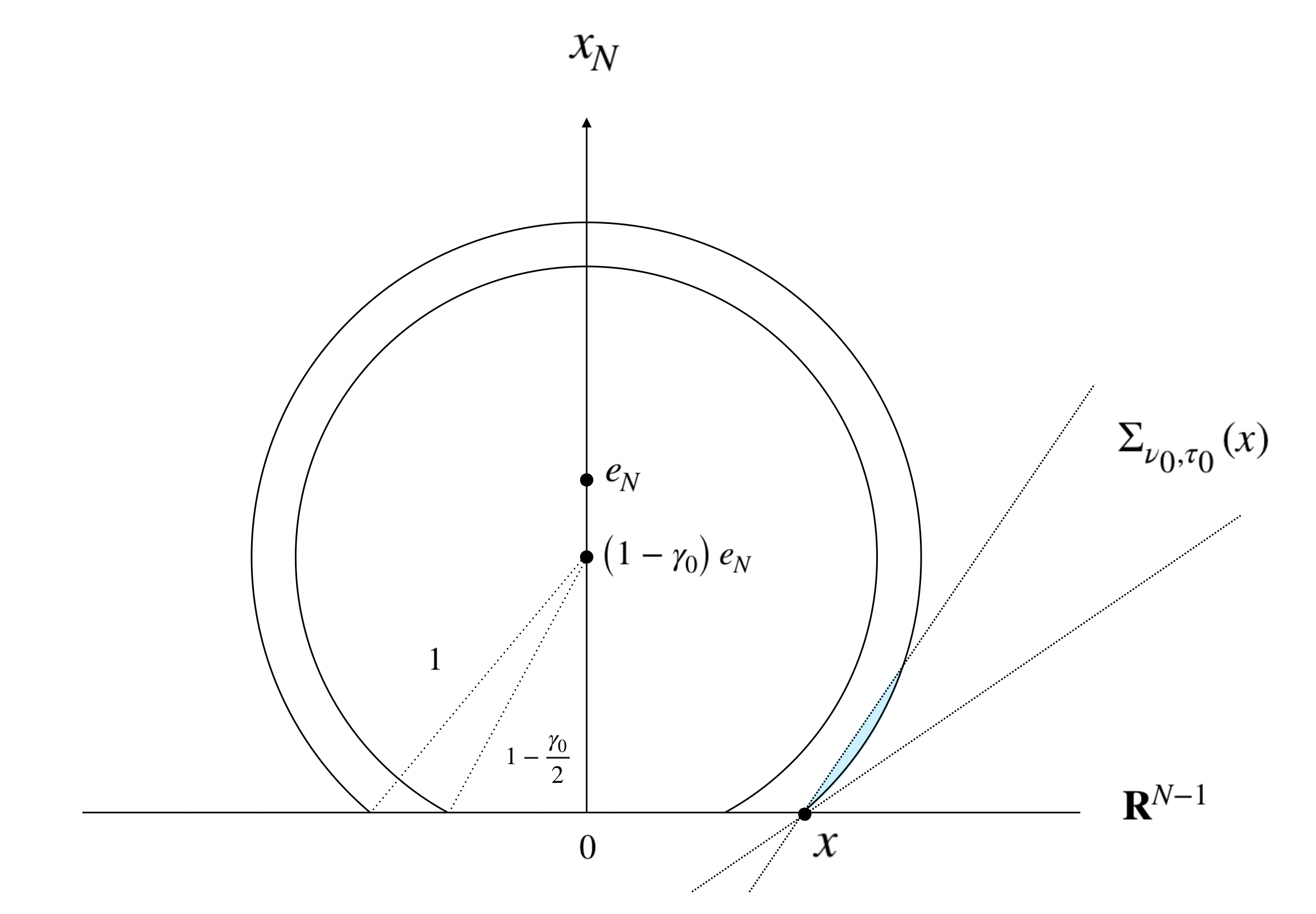}
\caption{The graph of $\varphi$.}
\label{figure5}
\end{figure}

For any $s\leq\alpha<\min\left\{1,2s\right\}$, we define
\begin{align*}
\phi_{\alpha}\left(x\right):=w_\alpha(x)\varphi\left(x\right):=\left(x_{N}\right)_{+}^{\alpha}\varphi\left(x\right).
\end{align*}

\noindent
\textbf{Step 1.} For any given $\alpha_{0}\in\left(s,\min\left\{1,2s\right\}\right)$, we show that there exists $M>0$ such that
\begin{align}\label{3-3-2}
-L\phi_{\alpha_{0}}\left(x\right)-L\phi_{s}\left(x\right)\leq M\phi_{s}\left(x\right)\quad\forall x\in\R_{+}^{N}.
\end{align}
For this we first note that for any $x\in\R_{+}^{N}\cap\left\{\left|x-\left(1-\gamma_{0}\right)e_{N}\right|\geq1\right\}$, by the assumption \eqref{1-0-1} we infer that 
\begin{align*}
-L\phi_{\alpha_{0}}\left(x\right)-L\phi_{s}\left(x\right)
=&-\int_{\R^{N}}\frac{\phi_{\alpha_{0}}\left(x+y\right)+\phi_{\alpha_{0}}\left(x-y\right)}{\left|y\right|^{N+2s}}a\left(\frac{y}{\left|y\right|}\right)dy\\
&-\int_{\R^{N}}\frac{\phi_{s}\left(x+y\right)+\phi_{s}\left(x-y\right)}{\left|y\right|^{N+2s}}a\left(\frac{y}{\left|y\right|}\right)dy\leq0=M\phi_{s}\left(x\right).
\end{align*}
Hence we only need to show that 
\begin{align}\label{3-3-3}
\inf_{x\in\R_{+}^{N}\cap B_{1}\left(\left(1-\gamma_{0}\right)e_{N}\right)}\frac{L\phi_{\alpha_{0}}\left(x\right)+L\phi_{s}\left(x\right)}{\phi_{s}\left(x\right)}>-\infty.
\end{align}
Assuming that \eqref{3-3-3} is not true, then there exists a convergent sequence $\left\{x_{n}\right\}_n\subset\R_{+}^{N}\cap B_{1}\left(\left(1-\gamma_{0}\right)e_{N}\right)$ such that
\begin{align}\label{eq9}
\lim_{n\rightarrow+\infty}\frac{L\phi_{\alpha_{0}}\left(x_{n}\right)+L\phi_{s}\left(x_{n}\right)}{\phi_{s}\left(x_{n}\right)}=-\infty.
\end{align}
Let $x_{n}\rightarrow x_{\infty}\in\overline{\R_{+}^{N}\cap B_{1}\left(\left(1-\gamma_{0}\right)e_{N}\right)}$ as $n\rightarrow+\infty$. We distinguish different cases,  each of them producing a contradiction to \eqref{eq9}.

\bigskip
\noindent Case 1: $x_{\infty}\in\R_{+}^{N}\cap B_{1}\left(\left(1-\gamma_{0}\right)e_{N}\right)$. We obtain 
\begin{align*}
\lim_{n\rightarrow+\infty} L\phi_{\alpha_{0}}\left(x_{n}\right)+L\phi_{s}\left(x_{n}\right)=L\phi_{\alpha_{0}}\left(x_{\infty}\right)+L\phi_{s}\left(x_{\infty}\right)
\end{align*}
and 
\begin{align*}
0<\lim_{n\rightarrow+\infty} \phi_{s}\left(x_{n}\right)=\phi_{s}\left(x_{\infty}\right)<+\infty,
\end{align*}
then
\begin{align*}
\lim_{n\rightarrow+\infty}\frac{L\phi_{\alpha_{0}}\left(x_{n}\right)+L\phi_{s}\left(x_{n}\right)}{\phi_{s}\left(x_{n}\right)}=\frac{L\phi_{\alpha_{0}}\left(x_{\infty}\right)+L\phi_{s}\left(x_{\infty}\right)}{\phi_{s}\left(x_{\infty}\right)}.
\end{align*}

\medskip
\noindent
Case 2: $x_{\infty}\in\R_{+}^{N}\cap \partial B_{1}\left(\left(1-\gamma_{0}\right)e_{N}\right)$. Recalling that $a\geq0$,  by \eqref{1-0-2} and \eqref{3-3-1} we have
\begin{align*}
\lim_{n\rightarrow+\infty}L\phi_{\alpha_{0}}\left(x_{n}\right)+L\phi_{s}\left(x_{n}\right)
&=\int_{\R^{N}}\frac{\phi_{\alpha_{0}}\left(x_{\infty}+y\right)+\phi_{\alpha_{0}}\left(x_{\infty}-y\right)}{\left|y\right|^{N+2s}}a\left(\frac{y}{\left|y\right|}\right)dy\\
&\quad+\int_{\R^{N}}\frac{\phi_{s}\left(x_{\infty}+y\right)+\phi_{s}\left(x_{\infty}-y\right)}{\left|y\right|^{N+2s}}a\left(\frac{y}{\left|y\right|}\right)dy\\
&\geq d\int_{\Sigma_{\nu_{0},\tau_{0}}\left(0\right)}\frac{\phi_{\alpha_{0}}\left(x_{\infty}+y\right)+\phi_{\alpha_{0}}\left(x_{\infty}-y\right)}{\left|y\right|^{N+2s}}dy\\
&\quad+d\int_{\Sigma_{\nu_{0},\tau_{0}}\left(0\right)}\frac{\phi_{s}\left(x_{\infty}+y\right)+\phi_{s}\left(x_{\infty}-y\right)}{\left|y\right|^{N+2s}}dy>0.
\end{align*}
Moreover, $\phi_{s}\left(x_{n}\right)>0$ for any $n$ and
\begin{align*}
\lim_{n\rightarrow+\infty} \phi_{s}\left(x_{n}\right)=0,
\end{align*}
thus
\begin{align*}
\lim_{n\rightarrow+\infty}\frac{L\phi_{\alpha_{0}}\left(x_{n}\right)+L\phi_{s}\left(x_{n}\right)}{\phi_{s}\left(x_{n}\right)}=+\infty.
\end{align*}

\medskip
\noindent
Case 3: $x_{\infty}\in\partial\R_{+}^{N}\cap B_{1}\left(\left(1-\gamma_{0}\right)e_{N}\right)$. By Lemmas \ref{2-3} and \ref{2-1}-(2), for any $x\in \R^{N}_{+}$
\begin{equation}\label{3-3-8}
\begin{split}
L\phi_{\alpha_{0}}\left(x\right)+L\phi_{s}\left(x\right)
&=C_{\alpha_{0}}\varphi\left(x\right)x^{\alpha_{0}-2s}_{N}+x_{N}^{\alpha_{0}}L\varphi\left(x\right)+x_{N}^{s}L\varphi\left(x\right)\\
&\quad+l\left[w_{\alpha_{0}},\varphi\right]\left(x\right)+l\left[w_{s},\varphi\right]\left(x\right),
\end{split}
\end{equation}
where $C_{\alpha_{0}}>0$ and 
\begin{align*}
&l\left[w_{\alpha_{0}},\varphi\right]\left(x\right)+l\left[w_{s},\varphi\right]\left(x\right)\\
&=\int_{\R^{N}}\left\{\frac{\left[\varphi\left(x+y\right)-\varphi\left(x\right)\right]\left[\left(x_{N}+y_{N}\right)_{+}^{\alpha_{0}}+\left(x_{N}+y_{N}\right)_{+}^{s}-x_{N}^{\alpha_{0}}-x_{N}^{s}\right]}{\left|y\right|^{N+2s}}\right.\\
&\quad\left.+\frac{\left[\varphi\left(x-y\right)-\varphi\left(x\right)\right]\left[\left(x_{N}-y_{N}\right)_{+}^{\alpha_{0}}+\left(x_{N}-y_{N}\right)_{+}^{s}-x_{N}^{\alpha_{0}}-x_{N}^{s}\right]}{\left|y\right|^{N+2s}}\right\}a\left(\frac{y}{\left|y\right|}\right)dy.
\end{align*}
We see that
\begin{align*}
\lim_{n\rightarrow+\infty}C_{\alpha_{0}}\varphi\left(x_{n}\right)\cdot\left(x_{n}\right)_{N}^{\alpha_{0}-2s}=+\infty
\end{align*}
and
\begin{align*}
\lim_{n\rightarrow+\infty}\left(x_{n}\right)_{N}^{\alpha_{0}}L\varphi\left(x_{n}\right)+\left(x_{n}\right)_{N}^{s}L\varphi\left(x_{n}\right)=0.
\end{align*}
Moreover Lemma \ref{3-2} yields
\begin{align*}
\lim_{n\rightarrow+\infty}l\left[w_{\alpha_{0}},\varphi\right]\left(x_{n}\right)+l\left[w_{s},\varphi\right]\left(x_{n}\right)=l\left[w_{\alpha_{0}},\varphi\right]\left(x_{\infty}\right)+l\left[w_{s},\varphi\right]\left(x_{\infty}\right),
\end{align*}
hence by \eqref{3-3-8},
\begin{align*}
\lim_{n\rightarrow+\infty}L\phi_{\alpha_{0}}\left(x_{n}\right)+L\phi_{s}\left(x_{n}\right)=+\infty.
\end{align*}
By the fact
\begin{align*}
\lim_{n\rightarrow+\infty} \phi_{s}\left(x_{n}\right)=0,
\end{align*}
and $\phi_{s}\left(x_{n}\right)>0$ for any $n$, we get
\begin{align*}
\lim_{n\rightarrow+\infty}\frac{L\phi_{\alpha_{0}}\left(x_{n}\right)+L\phi_{s}\left(x_{n}\right)}{\phi_{s}\left(x_{n}\right)}=+\infty.
\end{align*}

\medskip
\noindent
Case 4: $x_{\infty}\in\partial\R_{+}^{N}\cap \partial B_{1}\left(\left(1-\gamma_{0}\right)e_{N}\right)$. Notice that by \eqref{3-3-8}, since $\varphi(x_n)>0$ for any $n$,
\begin{equation}\label{3-3-10}
\begin{split}
L\phi_{\alpha_{0}}\left(x_{n}\right)+L\phi_{s}\left(x_{n}\right)
&\geq\left(x_{n}\right)_{N}^{\alpha_{0}}L\varphi\left(x_{n}\right)+\left(x_{n}\right)_{N}^{s}L\varphi\left(x_{n}\right)\\
&\quad+l\left[w_{\alpha_{0}},\varphi\right]\left(x_{n}\right)+l\left[w_{s},\varphi\right]\left(x_{n}\right).
\end{split}
\end{equation}
It is obvious that
\begin{align}\label{3-3-11}
\lim_{n\rightarrow+\infty}\left(x_{n}\right)_{N}^{\alpha_{0}}L\varphi\left(x_{n}\right)+\left(x_{n}\right)_{N}^{s}L\varphi\left(x_{n}\right)=0.
\end{align}
Since $a\geq0$, by \eqref{1-0-2}, Lemma \ref{3-2} and \eqref{3-3-1}  we have
\begin{align}\label{3-3-12}
\nonumber\lim_{n\rightarrow+\infty}l\left[w_{\alpha_{0}},\varphi\right]\left(x_{n}\right)+l\left[w_{s},\varphi\right]\left(x_{n}\right)
&=\int_{\R^{N}}\frac{\phi_{\alpha_{0}}\left(x_{\infty}+y\right)+\phi_{\alpha_{0}}\left(x_{\infty}-y\right)}{\left|y\right|^{N+2s}}a\left(\frac{y}{\left|y\right|}\right)dy\\
\nonumber&\quad+\int_{\R^{N}}\frac{\phi_{s}\left(x_{\infty}+y\right)+\phi_{s}\left(x_{\infty}-y\right)}{\left|y\right|^{N+2s}}a\left(\frac{y}{\left|y\right|}\right)dy\\
\nonumber&\geq d\int_{\Sigma_{\nu_{0},\tau_{0}}\left(0\right)}\frac{\phi_{\alpha_{0}}\left(x_{\infty}+y\right)+\phi_{\alpha_{0}}\left(x_{\infty}-y\right)}{\left|y\right|^{N+2s}}dy\\
&\quad+d\int_{\Sigma_{\nu_{0},\tau_{0}}\left(0\right)}\frac{\phi_{s}\left(x_{\infty}+y\right)+\phi_{s}\left(x_{\infty}-y\right)}{\left|y\right|^{N+2s}}dy>0.
\end{align}
Combining \eqref{3-3-10}, \eqref{3-3-11} and \eqref{3-3-12} with the fact
\begin{align*}
\lim_{n\rightarrow+\infty} \phi_{s}\left(x_{n}\right)=0
\end{align*}
and $\phi_{s}\left(x_{n}\right)>0$ for any $n$,
even in this case we get :
\begin{align*}
\lim_{n\rightarrow+\infty}\frac{L\phi_{\alpha_{0}}\left(x_{n}\right)+L\phi_{s}\left(x_{n}\right)}{\phi_{s}\left(x_{n}\right)}=+\infty.
\end{align*}
In conclusion, we have reached a contradiction to \eqref{eq9}  for any $x_{\infty}\in\overline{\R_{+}^{N}\cap B_{1}\left(\left(1-\gamma_{0}\right)e_{N}\right)}$, then inequality \eqref{3-3-2} holds.

\bigskip
\noindent
\textbf{Step 2.} For any $R>0$, we make the following rescaling
\begin{align*}
\varphi_{R}\left(x\right)=\varphi\left(\frac{x}{R}\right)=
\left\lbrace 
\begin{aligned}
&1,&&x\in B_{R\left(1-\frac{\gamma_{0}}{2}\right)}\left(R\left(1-\gamma_{0}\right)e_{N}\right),\\
&0,&&x\notin B_{R}\left(R\left(1-\gamma_{0}\right)e_{N}\right).
\end{aligned}
\right.
\end{align*}
For any $s\leq\alpha<\min\left\{1,2s\right\}$, we define
\begin{align*}
\phi_{\alpha,R}\left(x\right)=\left(x_{N}\right)_{+}^{\alpha}\varphi_{R}\left(x\right).
\end{align*}
Since $u$ is solution of \eqref{2-5-1}, by Proposition \ref{2-4} and Lemma \ref{2-1}-(1) we obtain
\begin{align*}
\int_{\R_{+}^{N}}u^{p}\phi_{s,R}dx
&\leq-R^{s}\int_{\R^{N}}\left(\frac{x_{N}}{R}\right)_{+}^{s}\varphi_{R}Ludx\\
&\leq-R^{s}\int_{\R^{N}}\left(\frac{x_{N}}{R}\right)_{+}^{s}\varphi_{R}Ludx-R^{s}\int_{\R^{N}}\left(\frac{x_{N}}{R}\right)_{+}^{\alpha_{0}}\varphi_{R}Ludx\\
&=-R^{-s}\int_{\R^{N}}uL\phi_{s}\left(\frac{x}{R}\right)dx-R^{-s}\int_{\R^{N}}uL\phi_{\alpha_{0}}\left(\frac{x}{R}\right)dx\\
&=R^{-s}\int_{\R_{+}^{N}}u\left[-L\phi_{\alpha_{0}}\left(\frac{x}{R}\right)-L\phi_{s}\left(\frac{x}{R}\right)\right]dx.
\end{align*}
Now we use \eqref{3-3-2} to infer that
\begin{align*}
\int_{\R_{+}^{N}}u\left[-L\phi_{\alpha_{0}}\left(\frac{x}{R}\right)-L\phi_{s}\left(\frac{x}{R}\right)\right]dx
\leq M\int_{\R_{+}^{N}}u\phi_{s}\left(\frac{x}{R}\right)dx=MR^{-s}\int_{\R_{+}^{N}}u\phi_{s,R}dx
\end{align*}
and that
\begin{align}\label{3-3-4}
\int_{\R_{+}^{N}}u^{p}\phi_{s,R}dx\leq MR^{-2s}\int_{\R_{+}^{N}}u\phi_{s,R}dx.
\end{align}
Since $0\leq\phi_{s,R}\leq(x_N)_+^s$  and $B_{R}\left(R\left(1-\gamma_{0}\right)e_{N}\right)\subset B_{2R}$,  we may apply the H\"{o}lder inequality to get, for any $p>1$,
\begin{align*}
\int_{\R_{+}^{N}}u\phi_{s,R}dx
&\leq\left(\int_{\R_{+}^{N}\cap B_{R}\left(R\left(1-\gamma_{0}\right)e_{N}\right)}u^{p}\phi_{s,R}dx\right)^{\frac{1}{p}}\left(\int_{\R_{+}^{N}\cap B_{R}\left(R\left(1-\gamma_{0}\right)e_{N}\right)}\phi_{s,R}dx\right)^{\frac{p-1}{p}}\\
&\leq C\left(N,p\right)R^{\frac{\left(N+s\right)\left(p-1\right)}{p}}\left(\int_{\R_{+}^{N}}u^{p}\phi_{s,R}dx\right)^{\frac{1}{p}}.
\end{align*}
Therefore, by \eqref{3-3-4},
\begin{align}\label{3-3-5}
\int_{\R_{+}^{N}}u^{p}\phi_{s,R}dx\leq C\left(N,M,p\right)R^{N+s-\frac{2sp}{p-1}}.
\end{align}

\bigskip
\noindent
\textbf{Step 3.} In the case $p=1$, we immediately obtain $u\equiv0$ by letting $R\rightarrow+\infty$ in \eqref{3-3-4}.\\ We now consider  $p>1$. Since  $\varphi_{R}\rightarrow\varphi_{\infty}\equiv1$ in $\R^{N}_{+}$ and $\phi_{s,R}\rightarrow\phi_{s,\infty}=x_{N}^{s}$ as $R\rightarrow+\infty$, if we let $R\rightarrow+\infty$ in \eqref{3-3-5}, we have $u\equiv0$ provided $p<\frac{N+s}{N-s}$. \\ If instead $p=\frac{N+s}{N-s}$, then we infer that
\begin{align}\label{3-3-6}
\int_{\R_{+}^{N}}x^{s}_{N}u^{p}dx<+\infty.
\end{align}
We can rewrite
\begin{align*}
\int_{\R_{+}^{N}}u\phi_{s,R}dx
=\int_{\R_{+}^{N}\cap B_{\sqrt{R}}}u\phi_{s,R}dx+\int_{\R_{+}^{N}\cap\left\{\sqrt{R}\leq\left|x\right|\leq 2R\right\}}u\phi_{s,R}dx.
\end{align*}
Using once again the fact $0\leq\phi_{s,R}\leq(x_N)_+^s$, by the H\"{o}lder inequality we obtain
\begin{align*}
\int_{\R_{+}^{N}\cap B_{\sqrt{R}}}u\phi_{s,R}dx
&\leq\left(\int_{\R_{+}^{N}\cap B_{\sqrt{R}}}x_{N}^{s}u^{p}dx\right)^{\frac{1}{p}}\left(\int_{\R_{+}^{N}\cap B_{\sqrt{R}}}x_{N}^{s}dx\right)^{\frac{p-1}{p}}\\
&\leq C\left(N,p\right)R^{\frac{\left(N+s\right)\left(p-1\right)}{2p}}\left(\int_{\R_{+}^{N}}x^{s}_{N}u^{p}dx\right)^{\frac{1}{p}}\\
&=C\left(N,p\right)R^{s}\left(\int_{\R_{+}^{N}}x^{s}_{N}u^{p}dx\right)^{\frac{1}{p}}
\end{align*}
and
\begin{align*}
\int_{\R_{+}^{N}\cap\left\{\sqrt{R}\leq\left|x\right|\leq 2R\right\}}u\phi_{s,R}dx
&\leq\left(\int_{\R_{+}^{N}\cap\left\{\sqrt{R}\leq\left|x\right|\leq 2R\right\}}x_{N}^{s}u^{p}dx\right)^{\frac{1}{p}}\left(\int_{\R_{+}^{N}\cap\left\{\sqrt{R}\leq\left|x\right|\leq 2R\right\}}x_{N}^{s}dx\right)^{\frac{p-1}{p}}\\
&\leq C\left(N,p\right)R^{\frac{\left(N+s\right)\left(p-1\right)}{p}}\left(\int_{\R_{+}^{N}\cap\left\{\sqrt{R}\leq\left|x\right|\leq 2R\right\}}x^{s}_{N}u^{p}dx\right)^{\frac{1}{p}}\\
&=C\left(N,p\right)R^{2s}\left(\int_{\R_{+}^{N}\cap\left\{\sqrt{R}\leq\left|x\right|\leq 2R\right\}}x^{s}_{N}u^{p}dx\right)^{\frac{1}{p}}.
\end{align*}
By \eqref{3-3-4}, we get
\begin{align*}
\int_{\R_{+}^{N}}u^{p}\phi_{s,R}dx\leq C\left(N,M,p\right)\left[R^{-s}\left(\int_{\R_{+}^{N}}x^{s}_{N}u^{p}dx\right)^{\frac{1}{p}}+\left(\int_{\R_{+}^{N}\cap\left\{\sqrt{R}\leq\left|x\right|\leq 2R\right\}}x^{s}_{N}u^{p}dx\right)^{\frac{1}{p}}\right].
\end{align*}
Therefore, by \eqref{3-3-6} the right-hand side of the above inequality goes to $0$ as  $R\rightarrow+\infty$ and we again conclude that $u\equiv0$.
\end{proof}

Now we are ready to prove Theorem \ref{th1}.

\begin{proof}[Proof of Theorem \ref{th1}]
Let $u$ be any classical solution of \eqref{1-0-3}. By Lemma \ref{2-5}, the function $\widetilde{u}$, defined in \eqref{eq10}, is a nonnegative classical solution of \eqref{2-5-1}. Theorem \ref{3-3} yields $\widetilde{u}\equiv0$ in $\R^N$, hence using the continuity of $u$ in $\R^N_+$ we infer that
\begin{align*}
u
\left\lbrace 
\begin{aligned}
&\geq0,&&x_{N}<1,\\
&=0,&&x_{N}\geq1.
\end{aligned}
\right.
\end{align*}
For any $\overline{x}$ with $\overline{x}_{N}=1$, we have
\begin{align*}
0=u^{p}\left(\overline{x}\right)\leq-Lu\left(\overline{x}\right)=-\int_{\R^{N}}\frac{u\left(\overline{x}+y\right)+u\left(\overline{x}-y\right)}{\left|y\right|^{N+2s}}a\left(\frac{y}{\left|y\right|}\right)dy\leq0
\end{align*}
and $Lu\left(\overline{x}\right)=0$. Since $u\in C^{2}\left(\R_{+}^{N}\right)$, then $u\equiv0$ in $\Sigma_{\nu_{0},\tau_{0}}\left(\overline{x}\right)\cap\R^{N}_{+}$. The arbitrariness of $\overline{x}$ implies $u\equiv0$ in $\R^{N}_{+}$ and moreover $u=0$ a.e. in $\overline{\R_{-}^{N}}$, see Figure \ref{figure6}. 
\begin{figure}[htbp]
\centering
\includegraphics[width=0.6\textwidth]{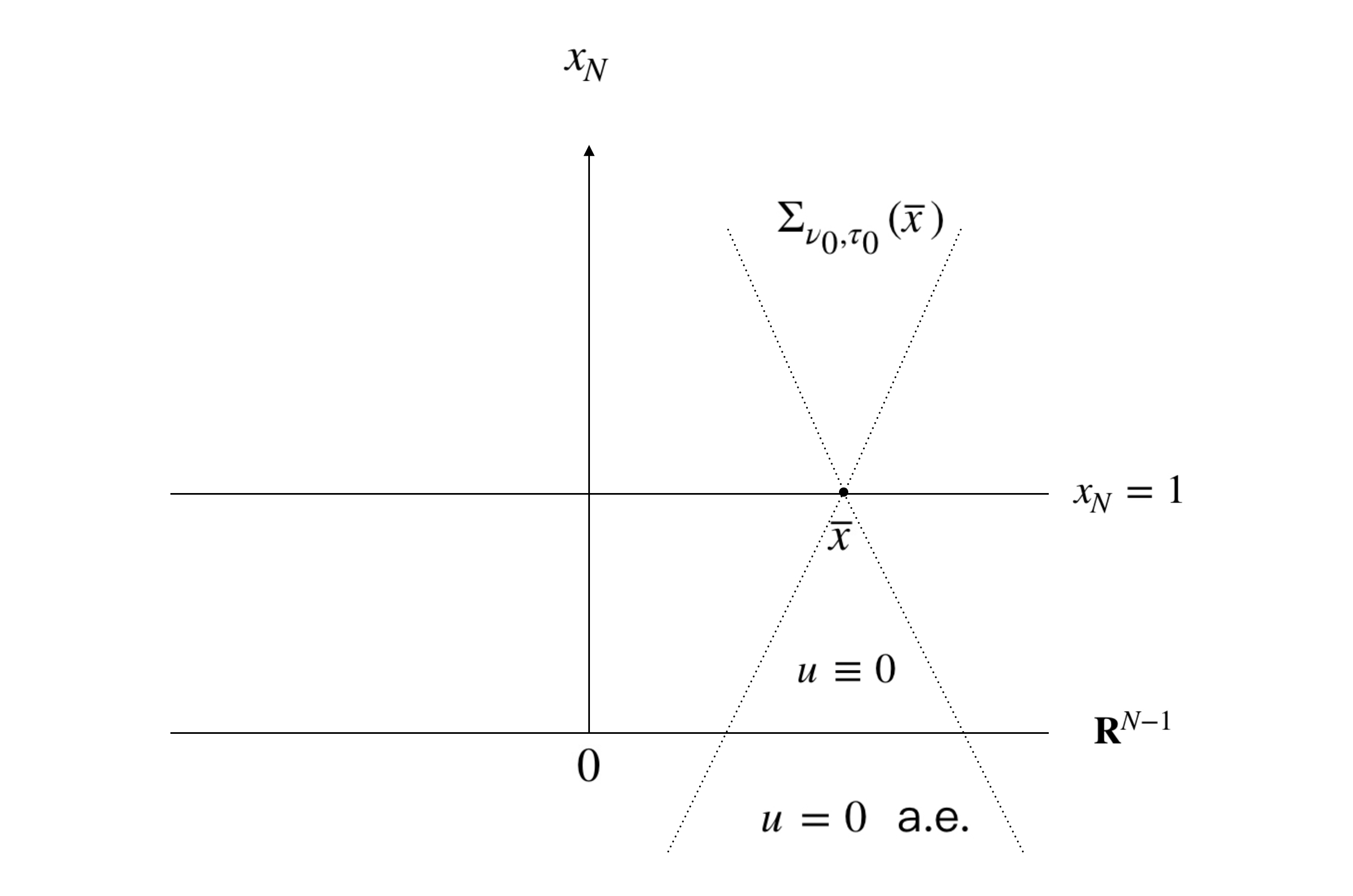}
\caption{}
\label{figure6}
\end{figure}

\end{proof}

\section{Optimality of the critical exponent \texorpdfstring{$\frac{N+s}{N-s}$}{Lg}}\label{s4}

We prove that the exponent $p=\frac{N+s}{N-s}$ in Theorem \ref{th1} is sharp, in the sense that there is at least some operator in the class \eqref{eq1} for which  problem \eqref{1-0-3} has nontrivial solutions as soon as $p>\frac{N+s}{N-s}$. In fact, this occurs whenever $a(\theta)$ is any positive constant function, in which case $-L$ coincides, up to a multiplicative constant, with $(-\Delta)^s$.

\begin{thm}\label{4-1}
For  any $p>\frac{N+s}{N-s}$ and $N\geq2$, the problem 
\begin{align}\label{4-1-1}
\left\lbrace 
\begin{aligned}
\left(-\Delta\right)^{s}u&\geq u^{p},&&x\in\R_{+}^{N},\\
u&=0,&&x\in\R_{-}^{N}
\end{aligned}
\right.
\end{align}
 admits  positive classical solutions.
\end{thm}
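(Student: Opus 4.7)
The plan is to exhibit an explicit positive classical supersolution. The key observation is that $p > \frac{N+s}{N-s}$ is equivalent to $\frac{2s}{p-1} < N-s$. Fixing any $\alpha \in (0,s)$ and any $\gamma > \alpha + \frac{2s}{p-1}$, and setting $\eta(x) := (1+|x|^2)^{-\gamma/2}$, I would try the candidate
$$u(x) := A\,(x_N)_+^\alpha\, \eta(x),\qquad x \in \R^N,$$
with $A > 0$ to be chosen small at the end. Clearly $u \geq 0$ in $\R^N$ with $u \equiv 0$ in $\overline{\R^N_-}$, $u \in C^\infty(\R^N_+)$, and $u(x) = O(|x|^{\alpha-\gamma})$ at infinity, so $u \in \mathcal{L}_s$.

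Next I would compute $-Lu$ via the product rule of Lemma \ref{2-1}(2). By Lemma \ref{2-3}, since $0<\alpha<s$, we have $L((x_N)_+^\alpha)(x) = C_\alpha x_N^{\alpha-2s}$ with $C_\alpha < 0$, so in $\R^N_+$
$$-Lu(x) = A|C_\alpha|\,\eta(x)\,x_N^{\alpha-2s} \;-\; A (x_N)_+^\alpha L\eta(x) \;-\; A\, l[(x_N)_+^\alpha,\eta](x).$$
The first term is pointwise strictly positive and carries the boundary singularity $x_N^{\alpha-2s}$. The other two must be controlled by the smoothness and decay of $\eta$: $L\eta$ is smooth with $L\eta(x) = O((1+|x|)^{-\gamma-2s})$, while the cross term is estimated by the rescaling idea of Proposition \ref{2-4}. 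After the substitution $z = y/x_N$,
$$l[(x_N)_+^\alpha,\eta](x) = x_N^{\alpha-2s}\!\int_{\R^N}\!\frac{\bigl[(1+z_N)_+^\alpha-1\bigr]\bigl[\eta(x+x_Nz)-\eta(x)\bigr] + \text{symm.}}{|z|^{N+2s}}\, a\!\left(\frac{z}{|z|}\right)dz,$$
and the inner integral tends to $0$ as $x_N\to 0^+$ by a Taylor expansion of $\eta$ at $x$ on the near field $|x_N z|<1$ (producing an $O(x_N)$ bound) together with a direct far-field estimate using $\eta\leq 1$ (yielding an $O(x_N^{2s-\alpha})$ bound).

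These estimates combine to give a pointwise lower bound
$$-Lu(x)\;\geq\; c\,A\,\eta(x)\,x_N^{\alpha-2s}\quad\text{in }\R^N_+$$
for some $c>0$. Comparing with $u^p(x) = A^p(x_N)_+^{\alpha p}\eta(x)^p$, the desired inequality $-Lu\geq u^p$ reduces to
$$A^{p-1}(x_N)_+^{\alpha(p-1)+2s}\eta(x)^{p-1}\leq c.$$
Using $(x_N)_+\leq |x|$, the left-hand side is dominated by $A^{p-1}(1+|x|^2)^{[(\alpha-\gamma)(p-1)+2s]/2}$, and the choice $\gamma>\alpha+\frac{2s}{p-1}$ makes the exponent strictly negative, so the quantity is bounded on $\R^N_+$. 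Taking $A>0$ sufficiently small then yields $-Lu\geq u^p$ pointwise in $\R^N_+$, producing the desired positive classical supersolution.

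The main obstacle is the uniform control of the cross term $l[(x_N)_+^\alpha,\eta]$ near the boundary $\{x_N=0\}$, where $(x_N)_+^\alpha$ is only H\"older continuous. The rescaling of Proposition \ref{2-4} gives the right framework, but one must verify carefully that its contribution is strictly smaller, in absolute value, than the positive leading term $A|C_\alpha|\eta\, x_N^{\alpha-2s}$, so that the lower bound $-Lu \geq cA\eta\, x_N^{\alpha-2s}$ indeed holds with a positive constant; far from the boundary and at infinity the competing scales match easily and the verification is routine.
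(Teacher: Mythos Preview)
Your approach differs substantially from the paper's, and the gap is not where you place it. The paper avoids product-rule estimates entirely via the Kelvin transform of $(-\Delta)^s$: for $w_\alpha(x)=(x_N)_+^\alpha$ with $\alpha\in(0,s)$, the transform $\overline w_\alpha(x)=(x_N)_+^\alpha/|x|^{N-2s+2\alpha}$ satisfies the \emph{exact} identity
\[
(-\Delta)^s\overline w_\alpha(x)=|C_\alpha|\,\frac{x_N^{\alpha-2s}}{|x|^{N-2s+2\alpha}}\qquad\text{in }\R^N_+,
\]
so for $u=\varepsilon\,\overline w_\alpha$ the inequality $(-\Delta)^su\ge u^p$ becomes purely algebraic in $x_N/|x|$ and is settled by tuning $\alpha$ (with a translation--truncation when $p\ge N/(N-2s)$). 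No cross terms, no asymptotic estimates.

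Your boundary analysis is fine; the genuine obstacle is at infinity. Along $x=te_N$, $t\to\infty$, all three pieces of the product formula---$|C_\alpha|\,\eta\,x_N^{\alpha-2s}$, $x_N^\alpha L\eta$, and $l[w_\alpha,\eta]$---decay at the \emph{same} rate $t^{\alpha-\gamma-2s}$, so whether the first dominates is a question of constants, not of orders, and is certainly not routine. Rescaling $y=tz$ one finds that $t^{\gamma+2s-\alpha}(-\Delta)^su(te_N)$ converges to $A\,(-\Delta)^s\big[(x_N)_+^\alpha|x|^{-\gamma}\big](e_N)$, and nothing in your outline establishes that this limit is positive. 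It is positive for $\gamma=0$ (Lemma~\ref{2-3}) and for the Kelvin exponent $\gamma=N-2s+2\alpha$, but for general $\gamma$ this is a nontrivial fact you would still have to prove; and for $\gamma$ large enough (e.g.\ $\gamma>N+\alpha$, so that $u\in L^1(\R^N)$) it actually fails, since then $Lu(te_N)\sim 2t^{-N-2s}\!\int_{\R^N}u>0$ and hence $(-\Delta)^su(te_N)<0$ for large $t$, so the claimed global lower bound is simply false. Filling this gap amounts to proving precisely the positivity that the Kelvin identity delivers for free.
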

\begin{proof}
Let 
\begin{align*}
w_{\alpha}\left(x\right)=\left(x_{N}\right)^{\alpha}_{+},\quad0<\alpha<s.
\end{align*}
Consider the Kelvin transform of $w_{\alpha}$, given by
\begin{align*}
\overline{w}_{\alpha}\left(x\right):=\frac{1}{\left|x\right|^{N-2s}}w_{\alpha}\left(\frac{x}{\left|x\right|^{2}}\right)=\frac{\left(x_{N}\right)^{\alpha}_{+}}{\left|x\right|^{N-2s+2\alpha}}\in C^{2}\left(\R_{+}^{N}\right)\cap\mathcal{L}_{s}.
\end{align*}
By the general property of the Kelvin transformation
$$
\left(-\Delta\right)^{s}\overline{w}_{\alpha}(x)=\frac{1}{\left|x\right|^{N+2s}}\left(-\Delta\right)^{s}w_{\alpha}\left(\frac{x}{\left|x\right|^{2}}\right),
$$
see e.g. \cite[Proposition A.1]{ROS}, from Lemma \ref{2-3} we infer that for any $x\in\R^{N}_{+}$,
$$
-\left(-\Delta\right)^{s}\overline{w}_{\alpha}\left(x\right)=C_{\alpha}\frac{1}{\left|x\right|^{N+2s}}\left(\frac{x_{N}}{\left|x\right|^{2}}\right)^{\alpha-2s}=C_{\alpha}\frac{x_{N}^{\alpha-2s}}{\left|x\right|^{N-2s+2\alpha}},
$$ 
where $C_{\alpha}<0$.

For $0<\varepsilon\leq\left(-C_{\alpha}\right)^{\frac{1}{p-1}}$, we define the function
\begin{align*}
u\left(x\right)=\varepsilon\overline{w}_{\alpha}\left(x\right).
\end{align*}
For any $x\in\R_{+}^{N}$, then we have
\begin{align}\label{4-1-2}
-\left(-\Delta\right)^{s}u\left(x\right)+u^{p}\left(x\right)
\nonumber&=\varepsilon C_{\alpha}\frac{x_{N}^{\alpha-2s}}{\left|x\right|^{N-2s+2\alpha}}+\varepsilon^{p}\frac{x_{N}^{\alpha p}}{\left|x\right|^{\left(N-2s+2\alpha\right)p}}\\
&=\varepsilon\frac{x_{N}^{\alpha-2s}}{\left|x\right|^{N-2s+2\alpha}}\left(C_{\alpha}+\varepsilon^{p-1}\frac{x_{N}^{\alpha p-\alpha+2s}}{\left|x\right|^{\left(N-2s+2\alpha\right)\left(p-1\right)}}\right).
\end{align}

When $\frac{N+s}{N-s}<p<\frac{N}{N-2s}$, we can choose $\alpha\in\left(0,s\right)$ in such a way  
\begin{align*}
\alpha p-\alpha+2s=\left(N-2s+2\alpha\right)\left(p-1\right),
\end{align*}
i.e. $\alpha=\frac{N-\left(N-2s\right)p}{p-1}$. Then, from \eqref{4-1-2}, for any $x\in\R_{+}^{N}$ we have 
\begin{align}\label{4-1-3}
-\left(-\Delta\right)^{s}u\left(x\right)+u^{p}\left(x\right)
\leq\varepsilon\frac{x_{N}^{\alpha-2s}}{\left|x\right|^{N-2s+2\alpha}}\left(C_{\alpha}+\varepsilon^{p-1}\right)\leq0.
\end{align}
Thus $u$ is a classical solution of \eqref{4-1-1} when $\frac{N+s}{N-s}<p<\frac{N}{N-2s}$. 

If $p\geq\frac{N}{N-2s}$, then for any $\alpha\in\left(0,s\right)$ one has
\begin{align*}
\alpha p-\alpha+2s<\left(N-2s+2\alpha\right)\left(p-1\right).
\end{align*}
In view \eqref {4-1-2}, $u$ satisfies \eqref{4-1-3} for any $x_{N}\geq1$. As a consequence, the function
\begin{align*}
\widetilde{u}\left(x\right)=
\left\lbrace 
\begin{aligned}
&u\left(x+e_{N}\right),&&x\in\R_{+}^{N},\\
&0,&&x\in\overline{\R_{-}^{N}} 
\end{aligned}
\right.
\end{align*}
is in turn a solution of \eqref{4-1-1}.
\end{proof}

\begin{rem}\label{N12}
The existence result of Theorem \ref{4-1} remains valid even if $N=1$. We briefly discuss this aspect by distinguishing the cases $0<s<\frac12$ and  $\frac12\leq s<1$.
\newline
If $0<s<\frac12$, using the fact that the function $|x|^{-(1-2s)}$  is $s$-harmonic  in $\R\backslash\left\{0\right\}$, one can argue as in the proof of Theorem \ref{4-1} with $N$ replaced by $1$.
\newline
If instead $\frac12\leq s<1$, the function $|x|^{2s-1}$ is still $s$-harmonic  in $\R\backslash\left\{0\right\}$ (if $s=\frac12$ we simply mean the constant $1$) but unbounded at infinity if $s>\frac12$. The equivalent formulation  of \eqref{4-1-2}  for the function $$u(x)=\varepsilon\frac{(x)_+^\alpha}{|x|^{2\alpha-2s+1}}$$ is 
\begin{align*}
-\left(-\Delta\right)^{s}u\left(x\right)+u^{p}\left(x\right)
\nonumber&=\frac{\varepsilon C_{\alpha}}{x^{1+\alpha}}+\frac{\varepsilon^{p}}{x^{\left(1+\alpha-2s\right)p}}\\
&=\frac{\varepsilon}{x^{1+\alpha}}\left(C_{\alpha}+\frac{\varepsilon^{p-1}}{x^{\left(1+\alpha-2s\right)p-1-\alpha}}\right)\qquad\forall x\in\R_+,
\end{align*}
where $C_\alpha<0$. In this case we can choose $\alpha=\frac{1+(2s-1)p}{p-1}$ is such a way
$$
(1+\alpha-2s)p=\alpha+1
$$
and, differently from the previous cases $N=1$ and $0<s<\frac12$ or $N\ge2$, the exponent $\alpha$ is now always positive since, for any $p>\frac{1+s}{1-s}$, it turns out that $\frac{1+\left(2s-1\right)p}{p-1}\in\left(2s-1,s\right)$. Hence
$$
-\left(-\Delta\right)^{s}u\left(x\right)+u^{p}\left(x\right)\leq0\quad\forall x\in\R_+.
$$
\end{rem}

\section{Nonexistence in the whole space \texorpdfstring{$\R^{N}$}{Lg}}\label{s5}
We prove in this section that below the nonlocal Serrin exponent, the only entire non negative 
supersolution in $\R^N$ is the trivial one. The proof is similar to the case of the half space but simpler and 
it is given for completeness and clarity sake.
\begin{thm}\label{5-1}
Let $N\geq2$ and $1\leq p\leq\frac{N}{N-2s}$. If $u\in C^{2}\left(\R^{N}\right)\cap\mathcal{L}_{s}$ is a nonnegative solution of 
\begin{align}\label{eq12}
-Lu\geq u^{p}\quad \text{in $\R^N$},
\end{align}
then $u\equiv0$.
\end{thm}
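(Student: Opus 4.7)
The proof parallels the three-step argument of Theorem~\ref{3-3}, with two substantial simplifications coming from the absence of a boundary: no singular barrier of the form $(x_N)_+^s$ is needed (a single smooth bump serves as test function), and Proposition~\ref{2-4} reduces to the standard self-adjointness of $L$ on $C_0^\infty$ test functions---the delicate boundary estimates \eqref{2-4-1}--\eqref{2-4-2} are not required since no boundary singularity is present. The justification that $uL\varphi_R\in L^1(\R^N)$ (hence that Fubini applies to both orders) is immediate from $|L\varphi_R(x)|\le C R^{-2s}$ on $B_{4R}$ and $|L\varphi_R(x)|\le CR^N|x|^{-N-2s}$ for $|x|>4R$, combined with the $\mathcal{L}_s$-growth of $u$.

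Fix a radial $\eta\in C_0^\infty(\R^N)$ with $0\le\eta\le 1$, $\eta(0)=1$, $\eta$ radially non-increasing, and $\operatorname{supp}\eta=\overline{B_2}$. Set $\varphi_R(x)=\eta(x/R)$. The key pointwise estimate, which replaces Step~1 of the proof of Theorem~\ref{3-3}, is
\[
(-L\eta)_+\;\le\;M\eta\qquad\text{on }\R^N
\]
for some $M>0$. Indeed, for every $x_0\in\partial\operatorname{supp}\eta$ one has $\eta(x_0)=0$ and
\[
L\eta(x_0)=\int_{\R^N}\bigl[\eta(x_0+y)+\eta(x_0-y)\bigr]|y|^{-N-2s}a\!\left(\tfrac{y}{|y|}\right)dy>0,
\]
the strict positivity following from \eqref{1-0-2} and the rescaled version of Lemma~\ref{3-1} (the cone $\Sigma_{\nu_0,\tau_0}(0)$ always contains directions pointing into $B_2^\circ$ from the boundary point $x_0$). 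By continuity of $L\eta$ this forces $L\eta>0$ on a whole neighborhood of $\partial\operatorname{supp}\eta$, so $(-L\eta)_+$ is supported in a compact set $K\Subset\operatorname{int}\operatorname{supp}\eta$, on which $\eta\ge c_0>0$; combined with $\|L\eta\|_{L^\infty}\le M_0$, this gives $(-L\eta)_+\le(M_0/c_0)\eta$. By Lemma~\ref{2-1}(1) this scales to $(-L\varphi_R)_+\le MR^{-2s}\varphi_R$.

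Multiplying \eqref{eq12} by $\varphi_R$, using the (simpler) self-adjointness of $L$ and discarding the non-positive part of $-L\varphi_R$, we obtain
\[
\int_{\R^N}u^p\varphi_R\,dx\;\le\;\int_{\R^N}u(-L\varphi_R)_+\,dx\;\le\;MR^{-2s}\int_{\R^N}u\varphi_R\,dx.
\]
Hölder's inequality against the measure $\varphi_R\,dx$ and the trivial estimate $\int\varphi_R\,dx\le CR^N$ then yield, for $p>1$, the one-shot bound
\[
\int u^p\varphi_R\,dx\;\le\;CR^{N-\frac{2sp}{p-1}}.
\]
When $1<p<\frac{N}{N-2s}$ the exponent is strictly negative; since $\varphi_R\uparrow 1$ pointwise as $R\to+\infty$, monotone convergence forces $u\equiv 0$. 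The linear case $p=1$ follows directly from the first display by letting $R\to\infty$. In the critical case $p=\frac{N}{N-2s}$ we only deduce $\int u^p\,dx<\infty$, and the argument is closed exactly as in Step~3 of Theorem~\ref{3-3}: split $B_{2R}=B_{\sqrt R}\cup(B_{2R}\setminus B_{\sqrt R})$, apply Hölder on each piece, and use absolute continuity of the integral on the exterior annulus. The main obstacle is the pointwise estimate above: for a generic smooth cutoff the ratio $(-L\eta)_+/\eta$ may blow up as $\eta\to 0^+$, and this is overcome by exploiting the nonnegativity of $\eta$, which forces $(-L\eta)_+$ to be supported strictly inside $\{\eta>0\}$.
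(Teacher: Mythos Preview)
Your proof is correct and follows essentially the same approach as the paper: the key inequality $(-L\eta)_+\le M\eta$ is exactly the paper's claim $-L\varphi\le M\varphi$ (equation~\eqref{5-1-1}), and your direct compactness argument for it---showing $L\eta>0$ near $\partial B_2$ via Lemma~\ref{3-1} and \eqref{1-0-2}, hence $(-L\eta)_+$ is supported where $\eta$ is bounded below---is just a non-contradiction rephrasing of the paper's case analysis. The rescaling, H\"older, and critical-case splitting steps are identical.
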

\begin{proof}
We choose $\varphi\in C^{\infty}_{0}\left(\R^{N}\right)$ such that $0<\varphi\leq1$ in $B_{2}$ and
\begin{align*}
\varphi\left(x\right)=
\left\lbrace 
\begin{aligned}
&1,&&x\in B_{1},\\
&0,&&x\notin B_{2}.
\end{aligned}
\right.
\end{align*}
We claim that there exists $M>0$ such that
\begin{align}\label{5-1-1}
-L\varphi\left(x\right)\leq M\varphi\left(x\right)\quad\forall x\in\R^{N}.
\end{align}
By the assumption \eqref{1-0-1},  for any $\left|x\right|\geq2$ 
\begin{align*}
-L\varphi\left(x\right)=-\int_{\R^{N}}\frac{\varphi\left(x+y\right)+\varphi\left(x-y\right)}{\left|y\right|^{N+2s}}a\left(\frac{y}{\left|y\right|}\right)dy\leq0=M\varphi\left(x\right).
\end{align*}
Hence \eqref{5-1-1} is equivalent to  
\begin{align}\label{5-1-2}
\inf_{x\in B_{2}}\frac{L\varphi\left(x\right)}{\varphi\left(x\right)}>-\infty.
\end{align}

Assume \eqref{5-1-2} does not hold, then there exists a convergent sequence $\left\{x_{n}\right\}_n\subset B_{2}$ such that
\begin{align}\label{5-1-3}
\lim_{n\rightarrow+\infty}\frac{L\varphi\left(x_{n}\right)}{\varphi\left(x_{n}\right)}=-\infty.
\end{align}
Let $x_{n}\rightarrow x_{\infty}\in\overline{B_{2}}$ as $n\rightarrow+\infty$. We distinguish two cases. 

\bigskip
\noindent Case 1: $\left|x_{\infty}\right|<2$. In this case
\begin{align*}
\lim_{n\rightarrow+\infty}\frac{L\varphi\left(x_{n}\right)}{\varphi\left(x_{n}\right)}=\frac{L\varphi\left(x_{\infty}\right)}{\varphi\left(x_{\infty}\right)}
\end{align*}
which is a finite quantity since $\varphi\in C^\infty_0\left(\R^N\right)$ and $\varphi\left(x_\infty\right)>0$.

\bigskip
\noindent Case 2: $\left|x_{\infty}\right|=2$. By the assumptions \eqref{1-0-1}-\eqref{1-0-2} and using Lemma \ref{3-1}, we have
\begin{align*}
\lim_{n\rightarrow+\infty}L\varphi\left(x_{n}\right)
&=\int_{\R^{N}}\frac{\varphi\left(x_{\infty}+y\right)+\varphi\left(x_{\infty}-y\right)}{\left|y\right|^{N+2s}}a\left(\frac{y}{\left|y\right|}\right)dy\\
&\geq d\int_{\Sigma_{\nu_{0},\tau_{0}}(0)}\frac{\varphi\left(x_{\infty}+y\right)+\varphi\left(x_{\infty}-y\right)}{\left|y\right|^{N+2s}}dy>0.
\end{align*}
Thus
\begin{align*}
\lim_{n\rightarrow+\infty}\frac{L\varphi\left(x_{n}\right)}{\varphi\left(x_{n}\right)}=+\infty.
\end{align*}
Therefore, the assumption \eqref{5-1-3} can not occur, so that \eqref{5-1-1} holds. 

\smallskip

For any $R>0$, we consider the rescaled test-function
\begin{align*}
\varphi_{R}\left(x\right)=\varphi\left(\frac{x}{R}\right).
\end{align*}
Multiplying \eqref{eq12} by $\varphi_{R}$, integrating by parts, then using Lemma \ref{2-1}-(1) and \eqref{5-1-1} we have
\begin{align}\label{5-1-4}
\int_{\R^{N}}u^{p}\varphi_{R}dx
\leq-\int_{\R^{N}}uL\varphi_{R}dx
\leq MR^{-2s}\int_{\R^{N}}u\varphi_{R}dx.
\end{align}
By the H\"{o}lder inequality,
\begin{align*}
\int_{\R^{N}}u\varphi_{R}dx
\nonumber&\leq\left(\int_{B_{2R}}u^{p}\varphi_{R}dx\right)^{\frac{1}{p}}\left(\int_{B_{2R}}\varphi_{R}dx\right)^{\frac{p-1}{p}}\\
&\leq C\left(N,p\right)R^{\frac{N\left(p-1\right)}{p}}\left(\int_{\R^{N}}u^{p}\varphi_{R}dx\right)^{\frac{1}{p}}.
\end{align*}
Thus, by \eqref{5-1-4}, we obtain that for any $p>1$
\begin{align}\label{5-1-5}
\int_{\R^{N}}u^{p}\varphi_{R}dx\leq C\left(N,M,p\right)R^{N-\frac{2sp}{p-1}}.
\end{align}

In the case $p=1$, we let $R\rightarrow+\infty$ in \eqref{5-1-4} to get $u\equiv0$. When $1<p<\frac{N}{N-2s}$, letting $R\rightarrow+\infty$ in \eqref{5-1-5}, we infer that $u\equiv0$. If $p=\frac{N}{N-2s}$, then \eqref{5-1-5} yields, in the limit as $R\rightarrow+\infty$, that
\begin{align}\label{5-1-6}
\int_{\R^{N}}u^{p}dx<+\infty.
\end{align}
Moreover, in this case, we can rewrite
\begin{align*}
\int_{\R^{N}}u\varphi_{R}dx
=\int_{\left|x\right|\leq\sqrt{R}}u\varphi_{R}dx+\int_{\sqrt{R}\leq\left|x\right|\leq 2R}u\varphi_{R}dx.
\end{align*}
By the H\"{o}lder inequality
\begin{align*}
\int_{\left|x\right|\leq\sqrt{R}}u\varphi_{R}dx
&\leq C\left(N,p\right)R^{s}\left(\int_{\R^{N}}u^{p}dx\right)^{\frac{1}{p}}
\end{align*}
and
\begin{align*}
\int_{\sqrt{R}\leq\left|x\right|\leq 2R}u\varphi_{R}dx
&\leq C\left(N,p\right)R^{2s}\left(\int_{\sqrt{R}\leq\left|x\right|\leq 2R}u^{p}dx\right)^{\frac{1}{p}}.
\end{align*}
Therefore, by \eqref{5-1-4}, it follows that
\begin{align*}
\int_{\R^{N}}u^{p}\varphi_{R}dx\leq C\left(N,M,p\right)\left[R^{-s}\left(\int_{\R^{N}}u^{p}dx\right)^{\frac{1}{p}}+\left(\int_{\sqrt{R}\leq\left|x\right|\leq 2R}u^{p}dx\right)^{\frac{1}{p}}\right].
\end{align*}
In view of \eqref{5-1-6}, we again conclude that $u\equiv0$ by letting $R\rightarrow+\infty$.
\end{proof}

\begin{rem} \label{N13}
If $N=1$, Theorem \ref{5-1} reads as follows: if $s\geq\frac12$, then $u\equiv 0$ is the only nonnegative solution of \eqref{eq12} for any $p\geq1$; if instead $s<\frac12$, then the same conclusion holds provided $1\leq p\leq\frac{1}{1-2s}$.
\end{rem}

\section*{Acknowledgements} 
\noindent I.B. and G.G. are partially supported by  INdAM-GNAMPA. The authors wish to thank Sapienza University of Rome for the financial support from the research project Ateneo 2022 \lq\lq At the edge of reaction-diffusion equations: from population dynamics to geometric analysis, a qualitative approach\rq\rq.

\end{document}